\documentclass[12pt]{amsart}

\setlength\topmargin{0cm}
\setlength\textheight{21cm}
\setlength\textwidth{14.5cm}
\setlength\footskip{2cm}

\usepackage{amsthm,amsmath,mathrsfs}
\usepackage{amssymb,amsthm,amsmath}
\usepackage[dvips]{geometry}
\usepackage{amscd}
\usepackage[latin1]{inputenc}
\usepackage[all] {xy}

\usepackage{colortbl}
\usepackage{color,graphics}

\usepackage[dvips]{geometry}
\usepackage[dvips]{graphicx}
\usepackage{psfrag}

\bibliographystyle{plain}

\theoremstyle{plain}
\newtheorem{mainthm}{Theorem}

\swapnumbers
\newtheorem{theorem}{Theorem}[section]
\newtheorem{proposition}[theorem]{Proposition}
\newtheorem{corollary}[theorem]{Corollary}
\newtheorem{lemma}[theorem]{Lemma}

\newtheorem{definition}[theorem]{Definition}

\newtheorem{remark}[theorem]{Remark}

\newcommand{\tcap}{\pitchfork}

\newcommand{\Z}{\mathbb{Z}}
\newcommand{\N}{\mathbb{N}}
\newcommand{\R}{\mathbb{R}}
\newcommand{\eps}{\varepsilon}

\newcommand{\SM}{{\mathcal M}}

\newcommand{\SR}{{\mathcal R}}
\newcommand{\ST}{{\mathcal T}}
\newcommand{\SU}{{\mathcal U}}


\newcommand{\diff}{\operatorname{Diff}}

\newcommand{\per}{\operatorname{Per}}

\newcommand{\fix}{\operatorname{Fix}}

\title{Mixing-like properties for some generic and robust dynamics}

\author{Alexander Arbieto}
\email{arbieto@im.ufrj.br}
\address{Instituto de Matem\'atica, Universidade Federal do Rio de Janeiro, P. O. Box 68530, 21945-970 Rio de Janeiro, Brazil.}
\author{Thiago Catalan}
\email{tcatalan@famat.ufu.br}
\address{Faculdade de Matem\'atica, Universidade Federal de Uberl\^andia.}
\author{Bruno Santiago}
\email{bruno\_santiago@im.ufrj.br}
\address{Instituto de Matem\'atica, Universidade Federal do Rio de Janeiro, P. O. Box 68530, 21945-970 Rio de Janeiro, Brazil.}

\date{\today}

\begin{document}

\maketitle

\begin{abstract}
We show that the set of Bernoulli measures of an isolated topologically mixing homoclinic class of a generic diffeomorphism is a dense subset of the set of invariant measures supported on the class.  For this, we introduce the large periods property and show that this is a robust property for these classes. We also show that the whole manifold is a homoclinic class for an open and dense subset of the set of robustly transitive diffeomorphisms far away from homoclinic tangencies. In particular, using results from Abdenur and Crovisier, we obtain that every diffeomorphism in this subset is robustly topologically mixing. 
\end{abstract}

\section{introduction}

The study of chain-recurrence classes began once that Conley's Fundamental Theorem of Dynamical Systems appeared. It says that up to quotient these classes on points any dynamical system looks like a gradient dynamics. 

However, some of these classes, called homoclinic classes, gained interest with the advent of Smale's Spectral Decomposition Theorem. Indeed, this theorem says that for Axiom A (hyperbolic) dynamics the non-wandering set splits into finitely many homoclinic classes. Moreover, each of these classes is isolated: it is the maximal invariant set of a neighbourhood of itself. 
Thus, these homoclinic classes are the sole chain recurrence classes of such dynamics. We recall that a homoclinic class of a periodic point $p$ is the closure of the transversal intersections of the invariant manifolds of the orbit of $p$. It is well known that such classes are transitive, i.e. they contain a point whose orbit is dense in the class.

Hence, the study of homoclinic classes, in non-hyperbolic situations, attracted the attention of many mathematicians, see \cite{BDV} for a survey on the subject. The purpose of this article is to contribute to this study both in the measure theoretical viewpoint and the topological viewpoint. 
The dynamical systems we shall consider here are diffeomorphisms and the topology 
used in the space of diffeomorphisms will be the $C^1$-topology.

In ergodic theory, an important problem is to describe the set of invariant measures of a dynamical system, since the theory says that the invariant measures help to describe the dynamics. In \cite{S2}, Sigmund studied this problem in the hyperbolic case. 
More precisely he proved that for any homoclinic class of an Axiom A diffeomorphism, the set of periodic measures, i.e. Dirac measures evenly distributed on a periodic orbit, is dense in the set of invariant measures. 
On the other hand, there is a refinement of the Spectral Decomposition Theorem, due to Bowen, which says that any such class of an Axiom A system splits into finitely many compact sets which are cyclically permuted by the dynamics and the dynamics of each piece, at the return, is topologically mixing, i.e. given two open sets $U$ and $V$ then the $n$-th iterate of $U$ meets $V$ \emph{for every} $n$ large enough. 
Using this, Sigmund in \cite{S1} was able to prove that the set of Bernoulli measures is dense among the invariant measures. He also proved that weakly mixing measures contains a residual subset of invariant measures. Indeed, the set of weakly mixing measures is a countable intersection of open sets. We recall that a measure is Bernoulli if the system endowed with it is measure theoretically isomorphic to a Bernoulli shift.

In the non-hyperbolic case, \cite{ABC} proved that for a generic diffeomorphism any isolated homoclinic class has periodic measures dense in the set of invariant measures, thus extending the first result of Sigmund mentioned above to the generic setting. We recall that a property holds generically if it holds in a countable intersection of open and dense sets of diffeomorphisms. Our first result extends the second result of Sigmund mentioned above.

\begin{mainthm}
\label{teoA}
For any generic diffeomorphism $f$, if the dynamics restricted to an isolated homoclinic class is topologically mixing then the Bernoulli measures are dense in space of invariant measures supported on the class. In particular, the set of weakly mixing measures contain a residual subset.
\end{mainthm}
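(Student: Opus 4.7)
My plan is to adapt the argument of Sigmund \cite{S1} from the hyperbolic setting to the generic one, replacing the specification property (not available for general homoclinic classes) by the large periods property introduced in this paper. Recall that Sigmund's argument rests on two pillars: (i) density of periodic measures in the space of invariant measures, and (ii) specification, which says that any finite collection of orbit segments can be $\eps$-shadowed by a single periodic orbit that concatenates them in any prescribed order with uniformly bounded transition times. Given these, for any finite convex combination $\sum_{i=1}^k \alpha_i \mu_{p_i}$ of periodic measures one pushes the Bernoulli $(\alpha_1,\ldots,\alpha_k)$-shift through the concatenation map; the resulting invariant measure is Bernoulli and arbitrarily close to $\sum \alpha_i \mu_{p_i}$ in the weak-$*$ topology. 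Since convex combinations of periodic measures are weak-$*$ dense in the invariant measures, Bernoulli measures are dense as well.

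In our setting, pillar (i) is granted by \cite{ABC}. For pillar (ii), I would first combine the robustness of the large periods property (asserted in the abstract) with standard Kupka--Smale-type residuality arguments in $\dif$ to show that, for a residual subset of diffeomorphisms, every isolated topologically mixing homoclinic class enjoys the large periods property. Then I would establish a specification-type lemma along these lines: for any finite family of hyperbolic periodic orbits $\mathcal{O}(p_1),\ldots,\mathcal{O}(p_k)$ contained in the class and any $\eps>0$, every concatenation of sufficiently long orbit segments from the $\mathcal{O}(p_i)$ in any prescribed order can be $\eps$-shadowed by a periodic orbit inside the class whose period equals the total length of the concatenated segments up to a bounded error. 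The hyperbolicity of each $p_i$ together with the transversal homoclinic and heteroclinic intersections inside the class (whose abundance follows from the topological mixing hypothesis and the definition of a homoclinic class) should provide the required connecting orbits via $\lambda$-lemma arguments, while the large periods property supplies the precise control on the resulting period.

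Once this specification-type lemma is in place, Sigmund's construction transfers essentially verbatim and yields the density of Bernoulli measures supported on the class. For the second assertion, the set of weakly mixing measures is always a $G_\delta$ subset of the space of invariant probability measures endowed with the weak-$*$ topology; since Bernoulli measures are in particular weakly mixing and are now known to be dense, the weakly mixing measures form a dense $G_\delta$ subset and are therefore residual.

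The main obstacle I anticipate is precisely the specification-type lemma: in the non-hyperbolic regime there is no uniform hyperbolic structure on the class, so the classical shadowing-based proof of specification breaks down. The large periods property should supply exactly the combinatorial flexibility on periods that classical specification would yield, and the proof will likely combine local hyperbolic estimates at the periodic points with connecting-lemma-type perturbations absorbed into the generic hypothesis.
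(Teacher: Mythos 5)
Your overall strategy (reduce to periodic measures via \cite{ABC}, then upgrade a periodic measure to a nearby Bernoulli one using a mixing hyperbolic structure) points in the right direction, but the proposal has a genuine gap at its central step and misses the one place where the mixing hypothesis actually does work. The specification-type lemma you postpone is not merely technical: no form of specification is available on a non-hyperbolic class, and your suggested proof via connecting-lemma perturbations cannot establish a property of a \emph{fixed} generic $f$, since such perturbations change the diffeomorphism. More importantly, even the weak version you need --- concatenating segments of homoclinically related periodic orbits with transition times realizing \emph{all} sufficiently large lengths --- is equivalent to the class having period $l(O(p))=1$, i.e.\ to $W^s(\tilde p)\pitchfork W^u(f(\tilde p))\neq\emptyset$. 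This does not follow ``from the topological mixing hypothesis and the definition of a homoclinic class'': the definition only yields intersections $W^u(f^n(\tilde p))\pitchfork W^s(\tilde p)\neq\emptyset$ for $n$ in the subgroup $l(O(p))\Z$ (Proposition \ref{prop.bla}), and deducing $l=1$ from mixing for a generic isolated class is precisely the content of the Abdenur--Crovisier decomposition (Theorem \ref{t.acmixing}, Remark \ref{mixing1}). Without $l=1$ your coding map intertwines the Bernoulli shift with a power $f^N$ only; averaging the pushed-forward measure to make it $f$-invariant introduces a rotation factor, so the result need not be Bernoulli for $f$. This is exactly the failure mode your construction does not rule out.

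For comparison, once the intersection $q\in W^s(\tilde p)\pitchfork W^u(f(\tilde p))$ is in hand the paper's route is simpler and avoids specification altogether: since single periodic measures (not just convex combinations) are dense by Theorem \ref{ABC}, it suffices to approximate one periodic measure $\mu_{\tilde p}$; one takes a small neighborhood $U$ of $O(\tilde p)\cup O(q)$, shows that the maximal invariant set $\Lambda_U$ is a topologically mixing horseshoe (Lemma \ref{horseshoemixing} and Proposition \ref{mixinghorseshoe}, where the large periods property enters through an explicit pseudo-orbit construction realizing every large period), and invokes Bowen's theorem together with Friedman--Ornstein to produce a Bernoulli measure on $\Lambda_U$; shrinking $U$ forces this measure to be weak-$*$ close to $\mu_{\tilde p}$. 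Your treatment of the final clause (weakly mixing measures form a $G_\delta$, Bernoulli measures are weakly mixing and dense, hence the weakly mixing measures are residual) is correct and agrees with the paper.
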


 The main tools employed here to prove Theorem \ref{teoA} are the results
from \cite{ABC}, mentioned above, the main theorem in \cite{AC}, and the {\it large 
periods property} which is a tool that we devised in order to detect mixing behavior.
For instance, a dynamical system has {\it large periods property} if there are periodic
points with any large enough period which are arbitrarily dense. The presence of this
property
implies that the system is topologically mixing. In the differentiable setting, we 
also define the \emph{homoclinic large periods property} which only considers the homoclinically related periodic points. We prove that this property is robust, see Proposition \ref{r.lpp}. We recall that a property is robust if it holds in a neighbourhood of the diffeomorphism. 

In \cite{AC}, the authors use their main result to prove that any homoclinic class of a generic diffeomorphism has a spectral decomposition in the sense of Bowen, like discussed before. One of the motivations is that all known examples of robustly transitive diffeomorphisms are robustly topologically mixing.

So, in the same article the authors ask the following questions:

\begin{enumerate}
 \item Is \emph{every} robustly transitive diffeomorphism topologically mixing?
 \item Failing that, is topological mixing at least a $C^1$ open and dense condition within the space of all robustly transitive diffeomorphisms?
\end{enumerate}

Now, we point out that  the results of section 2 of  \cite{AC} gives immediately the following result\footnote{We would like to thank Prof. Sylvain Crovisier for pointing out this result to us.} (see also Remark \ref{mixing1}).

\begin{theorem}
\label{baranasquestion}
Let $f$ be a generic diffeomorphism. If an isolated homoclinic class of $f$ is topologically mixing then it is robustly topologically mixing.
\end{theorem}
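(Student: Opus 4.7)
The plan is to use the \emph{homoclinic large periods property} discussed in the introduction as a bridge. Recall that this property implies topological mixing, and by Proposition \ref{r.lpp} it is robust under $C^1$-small perturbations. Consequently, it suffices to prove the following implication: for a generic diffeomorphism $f$, if an isolated homoclinic class $H(p,f)$ is topologically mixing, then it enjoys the homoclinic large periods property. Indeed, granted this, for any $g$ sufficiently $C^1$-close to $f$ the continuation $H(p_g,g)$ of the class (which persists because $H(p,f)$ is isolated) still has the homoclinic large periods property, and hence is topologically mixing.

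First I would fix an isolating neighborhood $U$ of $H(p,f)$ and verify that for all $g$ in some $C^1$-neighborhood of $f$ the continuation $p_g$ of $p$ is defined and the homoclinic class $H(p_g,g)$ coincides with the maximal invariant set of $U$ under $g$; this persistence of isolated homoclinic classes follows from upper semicontinuity of the maximal invariant set, the continuation of the hyperbolic periodic orbit of $p$, and standard generic arguments in the spirit of Bonatti and Crovisier. Next, I would invoke the results of Section~2 of \cite{AC}: for a generic $f$, the isolated homoclinic class $H(p,f)$ admits a Bowen-type spectral decomposition, i.e.\ it decomposes into finitely many disjoint compact pieces cyclically permuted by $f$, and the cyclic period coincides with the greatest common divisor of the periods of the periodic points homoclinically related to $p$. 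Topological mixing of $H(p,f)$ forces this cyclic period to be $1$, so the set of periods of points homoclinically related to $p$ contains every sufficiently large integer, by the Sylvester--Frobenius phenomenon.

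The main obstacle is upgrading this numerical statement to the required density of periodic orbits. Using that periodic points homoclinically related to $p$ are dense in $H(p,f)$ and the classical construction of transitive horseshoes containing any prescribed finite family of homoclinically related orbits, one produces, for every sufficiently large integer $n$, periodic points of period exactly $n$ homoclinically related to $p$ that are arbitrarily dense in $H(p,f)$. This gives the homoclinic large periods property for $f$, and Proposition \ref{r.lpp} together with the implication \emph{homoclinic large periods property $\Rightarrow$ topological mixing} then yields the robust topological mixing of $H(p,f)$.
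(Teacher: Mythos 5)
Your strategy is sound, and it is in fact the route the paper itself announces as an ``alternative proof'' and records as Corollary \ref{c.lpp}: genericity plus mixing forces, via Theorem \ref{t.acmixing}, the period of the class to be $1$, hence $W^s(p)\pitchfork W^u(f(p))\neq\emptyset$; this yields the homoclinic large periods property, which is robust (Proposition \ref{r.lpp}) and implies topological mixing (Lemma \ref{criteriopramixing}). The paper's \emph{primary} proof of Theorem \ref{baranasquestion} is shorter and bypasses periodic orbits: Remark \ref{mixing1} observes that the transverse intersection $W^s(p)\pitchfork W^u(f(p))\neq\emptyset$ extracted from Theorem \ref{t.acmixing} is itself a robust condition, and then quotes Proposition 2.3 of \cite{AC}, which converts that intersection directly into topological mixing of $H(p(g),g)$ for every nearby $g$. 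So your argument trades one black box (Proposition 2.3 of \cite{AC}) for another (the horseshoe realizing all large periods); what it buys is a self-contained mechanism (dense orbits of every large period) that the paper then reuses for Theorem \ref{teoA}.

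The one place where your write-up is too quick is precisely that second black box. Passing from ``the g.c.d.\ of the periods of the homoclinically related orbits is $1$'' to ``for every large $n$ there is a point of $\fix(f^n)$, homoclinically related to $p$, with $\eps$-dense orbit'' is not just Sylvester--Frobenius: one must realize the numerical semigroup by concatenating orbit segments inside a hyperbolic set, account for the transition times, and check that the resulting orbits remain $\eps$-dense and homoclinically related to $p$. Your concatenation of a finite $\eps/2$-dense family of homoclinically related orbits with coprime periods inside a common horseshoe does work, but the paper carries out this step differently and more economically in Lemma \ref{horseshoemixing}: starting from a single point $q\in W^s(\tilde p)\pitchfork W^u(f(\tilde p))$ (available for any homoclinically related $\tilde p$ by Proposition \ref{prop.bla} once the class period is $1$) it builds, for every large $n$, an explicit periodic $\delta$-pseudo-orbit of period exactly $n$ near $O(\tilde p)\cup O(q)$ and invokes the Shadowing Lemma. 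Fleshing out your concatenation to that level of detail completes the proof. Finally, your preliminary discussion of persistence of the isolating neighborhood is unnecessary: robust topological mixing is defined here in terms of the continuation $H(p(g),g)$, which Proposition \ref{r.lpp} already handles.
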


Actually, since the large periods property implies topological mixing, the robustness of this property could lead to another proof of the previous result, see Section 4.

We want now attack problem (2) above. To this purpose it is natural to look for the global dynamics of the previous theorem instead of the semi-local dynamics. This leads us to a question posed in \cite{BDV} (Problem 7.25, page 144): ``For an open and dense subset of robustly transitive partially hyperbolic diffeomorphism: Is the whole manifold robustly a homoclinic class?''. 
Recall by a result of \cite{BC}, for generic transitive diffeomorphisms, the whole manifold is a homoclinic class.

Our next result gives a positive answer to Problem 7.25 of \cite{BDV} (quoted above)
\emph{far from homoclinic tangencies}. A homoclinic tangency is a non-transversal intersection between the invariant manifolds of a hyperbolic periodic point.
The result is the following:

\begin{mainthm}
 There exists an open and dense subset  among robustly transitive diffeomorphisms far from homoclinic tangencies  formed by diffeomorphisms such that the whole manifold is a homoclinic class.
\label{main.homoclinic}\end{mainthm}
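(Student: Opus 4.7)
The plan is to show that the set $\mathcal{D}$ of diffeomorphisms in $\mathcal{RT}$ (the $C^1$-open set of robustly transitive diffeomorphisms far from homoclinic tangencies) for which the whole manifold is a homoclinic class is both dense and open in $\mathcal{RT}$.

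For density, the input is the result of Bonatti and Crovisier \cite{BC}: for a $C^1$-generic transitive diffeomorphism the whole manifold coincides with the homoclinic class of some periodic point. Intersecting the residual set this produces with the open set $\mathcal{RT}$ yields a residual, hence dense, subset of $\mathcal{RT}$ contained in $\mathcal{D}$.

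For openness, fix $f \in \mathcal{D}$ with $M = H(p_f, f)$ and consider $g$ ranging over a $C^1$-neighborhood $\mathcal{U}\subset\mathcal{RT}$ of $f$. The continuation $p_g$ of the hyperbolic periodic point $p_f$ is well defined, and robust transitivity forces the unique chain-recurrence class of $g$ to be all of $M$, so $H(p_g, g) \subset M$ automatically. To obtain equality, the plan is to combine three ingredients: the persistence of transverse homoclinic intersections of the orbit of $p_g$, which places a dense collection of homoclinic points throughout $M$; the continuous dominated splitting on $M$ furnished by being far from tangencies (following Wen), which precludes index changes among periodic points in $H(p_g, g)$ and thus keeps the homoclinic class from splitting into several pieces; and the robustness of the homoclinic large periods property established in Proposition \ref{r.lpp}, which guarantees that the dense family of periodic orbits homoclinically related to $p_f$ admits $C^1$-persistent continuations homoclinically related to $p_g$. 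Together with the rigidity extracted from section 2 of \cite{AC} (the same machinery that underlies Theorem \ref{baranasquestion}), these ingredients force $H(p_g, g) = M$ throughout $\mathcal{U}$.

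The main obstacle is precisely this last step: transferring the equality ``homoclinic class equals chain-recurrence class'' from the generic ($G_\delta$) setting of \cite{BC,AC} to an open condition inside $\mathcal{RT}$. The naive lower semi-continuity of the homoclinic class yields only that $H(p_g,g)$ is $\delta$-dense in $M$ in a $\delta$-dependent neighborhood of $f$, which does not suffice. The continuous dominated splitting provided by being far from tangencies supplies the compactness and rigidity required to upgrade this to genuine density in a uniform neighborhood, preventing the collapse of $H(p_g, g)$ to a proper invariant subset of $M$. Once $\mathcal{D}$ is shown to be open and dense, the announced consequence of robust topological mixing follows by applying Theorem \ref{baranasquestion} to the isolated homoclinic class $M$ for each $f\in\mathcal{D}$.
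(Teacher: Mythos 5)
Your density step is fine and matches the paper. The gap is in the openness step, and you have in fact correctly located it yourself without closing it. The assertion that the continuous dominated splitting coming from being far from tangencies ``supplies the compactness and rigidity required'' to prevent $H(p_g,g)$ from collapsing to a proper subset is not an argument: a dominated splitting on $M$ gives no lower bound on the size of a homoclinic class, and homoclinic classes are only lower semicontinuous, which (as you note) yields only $\delta$-density on $\delta$-dependent neighborhoods. Likewise, Proposition \ref{r.lpp} concerns the mixing structure \emph{inside} a homoclinic class, not its size, and the section~2 machinery of \cite{AC} governs the period/decomposition of a class, not whether it fills $M$. None of these ingredients produces the robust statement $H(p_g,g)=M$. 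Note also that the paper does not claim, and does not need, that the set $\{f: M=H(p,f)\}$ is open; it only builds some open set accumulating on each generic $f$.

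The missing idea is the blender mechanism. The paper first uses Theorem \ref{p.CSY} plus Lemma \ref{change.index} to perturb so that there are hyperbolic periodic points $p_s,\dots,p_{d-u}$ of every index between $\dim E^s$ and $d-\dim E^u$ (this also forces $E^s,E^u$ nontrivial, ruling out sources/sinks). Then, via the connecting lemma, it creates quasi-transversal heterodimensional cycles between consecutive indices and invokes the Diaz--Rocha blender result (Proposition \ref{p.blender}) to obtain an \emph{open} set on which $W^s(p_{i+1}(g))\subset cl(W^s(p_i(g)))$ and $W^u(p_i(g))\subset cl(W^u(p_{i+1}(g)))$ robustly. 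Combined with the robust density of $W^u(O(p_s))$ and $W^s(O(p_{d-u}))$ (from transitivity plus the uniform size of the strong stable/unstable leaves) and Lemma \ref{transversallity} (which needs the index of the reference point to equal $\dim E^s$ --- another point your choice of an arbitrary $p_f$ does not address), this yields $M=H(p_s(g))$ on an open set. Without the blenders, or some substitute producing robustly dense invariant manifolds, your openness step does not go through.
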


 This result together with Theorem \ref{baranasquestion} give us a partial answer to question (2) above, posed in \cite{AC}. 

\begin{mainthm}
\label{r.mixing}
There is an open and dense subset among robustly transitive diffeomorphisms far from homoclinic tangencies formed by robustly topologically mixing diffeomorphisms. 
\end{mainthm}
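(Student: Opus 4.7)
The plan is to combine Theorem~\ref{main.homoclinic} with Theorem~\ref{baranasquestion}, using connectedness of the ambient manifold $M$ to force the spectral decomposition of the class to be trivial. Denote by $\mathcal{RT} \subset \dif$ the $C^1$-open set of robustly transitive diffeomorphisms far from homoclinic tangencies, and let $\SU \subset \mathcal{RT}$ be the open and dense subset provided by Theorem~\ref{main.homoclinic}: for every $f \in \SU$ the whole manifold $M$ equals a homoclinic class $H(p_f,f)$ of some hyperbolic periodic point $p_f$. Note that this class is automatically \emph{isolated}, since $M$ is trivially the maximal invariant subset of itself. I want to show that the set of robustly topologically mixing diffeomorphisms is open and dense in $\mathcal{RT}$; openness follows directly from the definition of a robust property, so only density requires proof.

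For density, I would intersect $\SU$ with a residual subset $\SR \subset \dif$ on which Theorem~\ref{baranasquestion} applies \emph{and} on which the generic spectral decomposition of isolated homoclinic classes from \cite{AC} holds. Fix $f \in \SU \cap \SR$. Then $M = H(p_f,f)$ splits as a disjoint union of finitely many pairwise disjoint nonempty compact pieces $M_1, \dots, M_k$ which are cyclically permuted by $f$, with $f^k|_{M_i}$ topologically mixing. Since a finite partition of $M$ into disjoint nonempty compact sets is automatically a partition into clopen sets, and $M$ is connected, one must have $k=1$; therefore $f$ itself is topologically mixing on $M$. Theorem~\ref{baranasquestion} then upgrades this to robust topological mixing.

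Density of robustly topologically mixing diffeomorphisms in $\mathcal{RT}$ now follows by a two-step approximation: given any $f \in \mathcal{RT}$, first use openness and density of $\SU$ in $\mathcal{RT}$ to produce $g_1 \in \SU$ near $f$, then use residuality of $\SR$ to produce $g \in \SU \cap \SR$ near $g_1$; by the previous paragraph, $g$ is robustly mixing, giving the desired density.

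The main obstacle I anticipate is ensuring that the $C^1$-generic spectral decomposition provided by \cite{AC} indeed produces \emph{pairwise disjoint} pieces, as this is precisely what drives the connectedness argument forcing $k=1$. This should be contained in the Abdenur--Crovisier framework, where each piece is described as a clopen subset of the class on which the first-return dynamics is topologically mixing, but it is the point that needs to be verified carefully when writing out the proof.
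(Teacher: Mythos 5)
Your proposal is correct and follows essentially the same route as the paper, which obtains Theorem~\ref{r.mixing} by combining Theorem~\ref{main.homoclinic} with Theorem~\ref{baranasquestion} and the Abdenur--Crovisier decomposition of Theorem~\ref{t.acmixing} (whose pieces are pairwise disjoint by that statement, so connectedness of $M$ forces a single piece exactly as you argue). The only point worth making explicit is that the open sets produced by Theorem~\ref{r.homoclinic} realize $M$ as the homoclinic class of the \emph{continuation} $p_s(g)$ of a fixed periodic point, so the robust topological mixing of the class $H(p_s(g),g)$ furnished by Theorem~\ref{baranasquestion} is indeed robust topological mixing of the diffeomorphism $g$ on all of $M$.
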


These two results were previously obtained by \cite{BDU} for strongly partially hyperbolic diffeomorphisms 
with one dimensional center bundle, see also \cite{HHU}. By strong partial hyperbolicity we mean partial hyperbolicity with both non-trivial extremal bundles such that the center bundle splits in one-dimensional subbundles in a dominated way.
Actually, they obtain this proving that one of the strong foliations given by the partial hyperbolicity is minimal, which is 
a stronger property than topological mixing. In order to obtain this minimality they used arguments involving the accessibility property. We notice however that our results hold for diffeomorphisms with higher dimensional center directions. In section two, we present a way to produce such examples.

 This paper is organized as follows. In Section 2 we give the precise definitions of the main objects we shall
deal with. In Section 3 we state the known results that will be our main tools. 
In Section 4 we introduce the large periods property.  
In Section 5 we use the large periods property to prove Theorem \ref{teoA}. 
Finally, in Section 6 we prove Theorem \ref{main.homoclinic}.

\

{\bf Acknowledgements:} A.A. and B.S. want to thank FAMAT-UFU, and T.C. wants to thank IM-UFRJ for the kind hospitality of these institutions during the preparation of this work. This work was partially supported by CNPq, CAPES, FAPERJ and FAPEMIG.

\section{Precise Definitions}

In this section, we give the precise definitions of the objects used in the statements of the results.  In this paper $M$ will be a closed and connected Riemaniann manifold of dimension $d$. Also, $cl(.)$ will denote the closure operator.

\subsection{Topological dynamics}

Let $f:M\rightarrow M$ be a homeomorphism. Given $x\in M$, we define the \emph{orbit} of $x$ 
as the set $O(x):=\{f^n(x);n\in\Z\}$. The forward orbit of $x$ is the set $O^{+}(x):=\{f^n(x);n\in\N\}$. 
In a similar way we define the backward orbit $O^{-}(x)$. If necessary, to emphasize the dependence of $f$,
we may write $O_f(x)$.

Given $\Lambda\subset M$ we say that it is an \emph{invariant} set if
$f(\Lambda)=\Lambda$.

We recall the notions of transitivity and mixing. We say that $f$ is transitive if there exists a point
in $M$ whose forward orbit is dense. This is equivalent to the existence of a dense backward orbit and
is also equivalent to the following condition: for every pair $U,V$ of open sets, there exists $n>0$ 
such that $f^n(U)\cap V\neq\emptyset$.

More specially, we say that $f$ is topologically mixing if for every par $U,V$ of open sets there 
exists $N_0>0$ such that $n\geq N_0$ implies $f^n(U)\cap V\neq\emptyset$.

\subsection{Hyperbolic Periodic Points}

We say that $p$ is a \emph{periodic point} if $f^n(p)=p$ for some $n\geq 1$. The minimum of such $n$ is called the \emph{period} of $p$ and it is denoted by $\tau(p)$.

 The periodic point is \emph{hyperbolic} if the eigenvalues of $Df^{\tau(p)}(p)$ do not belong to $S^1$. As usual, $E^s(p)$ (resp. $E^ u(p)$) denotes the eigenspace of the eigenvalues with norm smaller (resp. bigger) than one. 
This gives a $Df^{\tau(p)}$ invariant splitting of the tangent bundle over the orbit $O(p)$ of $p$. The {\it index } of a hyperbolic periodic point $p$ is the dimension of the stable direction, denoted by $I(p)$.

If $p$ is a hyperbolic periodic point for $f$ then every diffeomorphism $g$, $C^1-$close to $f$ have also a hyperbolic periodic point close to $p$ with same period and index, which is called the {continuation of $p$ for $g$}, and it is denoted by {\it $p(g)$}.

The local stable and unstable manifolds of a hyperbolic periodic point $p$ are defined as follows: given $\eps>0$ small enough, we set
$$
W^s_{loc}(p)=\{x\in M ; \ \ d(f^n(x), f^n(p))\leq \eps \text{, for every }n\geq 0\} \text{ and } $$
$$W^u_{loc}(p)=\{x\in M ; \ \d(f^{-n}(x), f^{-n}(p))\leq \eps \text{, for every }n\geq 0\}.
$$ 
They are differentiable manifolds tangent at $p$ to $E^s(p)$ and $E^u(p)$. The stable and unstable manifolds are given by the saturations of the local manifolds. indeed,
$$W^s(p)=\bigcup_{n\geq 0}f^{-n\tau(p)}(W^s_{loc}(p))\text{  and }W^u(p)=\bigcup_{n\geq 0}f^{n\tau(p)}(W^u_{loc}(p)).$$

The stable and unstable set of a hyperbolic periodic orbit, $O(p)$ are given by:
$$
W^s(O(p))=\bigcup_{j=0}^{\tau(p)-1} W^s(f^j(p)) \text{ and } W^u(O(p))=\bigcup_{j=0}^{\tau(p)-1} W^u(f^j(p)).
$$

\subsection{Homoclinic Intersections}

If $p$ is a hyperbolic periodic point of $f$, then its \emph{homoclinic class} $H(p)$ is the closure of the transversal intersections of the stable manifold and unstable manifold of the orbit of $p$:
$$H(p)=cl\big(W^s(O(p))\pitchfork W^u(O(p))\big).$$

We say that a hyperbolic periodic point $q$ is \emph{ homoclinically related} to $p$ if $W^s(O(p))\pitchfork W^u(O(q))\neq \emptyset$ and $W^u(O(p))\pitchfork W^s(O(q))\neq \emptyset$. It is well known that a homoclinic class coincides with the closure of the hyperbolic periodic points homoclinically related to $p$. Moreover, it is a  transitive invariant set.  We say that a homoclinic class $H(p)$ has a robust property if $H(p(g))$ has also this property for any diffeomorphism $g$ sufficiently close to $f$. 

We define {\it the period of a homoclinic class} $H(p)$ as the greatest common divisor
of the periods of the hyperbolic periodic points homoclinically related to $p$ and we denote by $l(O(p))$.

We say that the homoclinic class $H(p)$ is isolated if there exists a neighbourhood $U$ of $H(p)$ such that $H(p)=\bigcap_{n\in \mathbb{Z}}f^n(U)$.

On the other hand, we say that a non-transversal intersection between $W^s(O(p))$ and $W^u(O(p))$ is a  \emph{homoclinic tangency}. We denote by $\mathcal{HT}(M)$ the set of diffeomorphisms exhibiting a homoclinic tangency. We will say that a diffeomorphism $f$ is \emph{far from homoclinic tangencies} if $f\notin cl(\mathcal{HT}(M))$.

Given $p$ and $q$ hyperbolic periodic points with $I(p)<I(q)$ we say that they form a {\it heterodimensional
cycle} if there exists $x\in W^s(O(p))\cap W^u(O(q))$, with $\dim{(T_xW^s(O(p))\cap T_xW^u(O(q)))}=0$ 
and $W^u(O(p))\tcap W^u(O(q))\neq\emptyset$.

\subsection{Invariant Measures}

A probability measure $\mu$ is $f$-invariant if $\mu(f^{-1}(B))=\mu(B)$ for every measurable set $B$. An invariant measure is ergodic if the measure of any invariant set is zero or one. Let $\SM(f)$ be the space of $f$-invariant \textit{probability measures} on $M$, and let $\SM_e(f)$ denote the ergodic elements of $\SM(f)$.

For a periodic point $p$ of $f$ with period $\tau(p)$, we let $ \mu_p$ denote the  periodic measure  associated to $p$, given by
$$\mu_p =\frac{1}{\tau(p)} \sum_{ x\in O(p)} \delta_x $$
where $\delta_x$ is the Dirac measure at $x$.

Given an invariant measure $\mu$, Oseledets' Theorem says that for almost every $x\in M$ and all $v\in T_xM$ the limits
$$\lambda(x,v):=\lim_{n\to^{+}_{-}\infty}\frac{1}{n}\log{\|Df^n(x)v\|}$$ 
exists and are equal. Moreover, one has a measurably varying splitting of the tangent bundle $TM=E_1\oplus...\oplus E_k$ and measurable invariant functions $\lambda_j:M\to\R$, $j=1,...,k$ such that if $v\in E_j$ then $\lambda(x,v)=\lambda_j(x)$. The number $\lambda_j(x)$ is called the \emph{Lyapunov exponent} of $f$ at $x$.

Now, let us define the notion of Bernoulli measure. We first recall the so-called Bernoulli shift. It is the homeomorphism $\sigma:\{1,...,n\}^{\Z}\to\{1,...,n\}^{\Z}$ defined by $\sigma(\{x_{n}\})=\{x_{n+1}\}$. 
In $\{1,...,n\}^{\Z}$ consider $m_B$ the product measure with respect to the uniform probability in $\{1,...,n\}$.
It is easy to see that $m_B$ is invariant under $\sigma$.

We say that $\mu\in\SM(f)$ is a \emph{Bernoulli measure} if $(f,\mu)$ is measure theoretically isomorphic to $(\sigma,m_B)$.

\subsection{Partial hyperbolicity}

Let $\Lambda\subset M$ to be invariant under a diffeomorphism $f$. Let $E,F$ to be subbundles of $T_{\Lambda}M$ of the tangent bundle over
$\Lambda$ with trivial intersection at every $x\in\Lambda$. We say that $E$ \emph{dominates} $F$ if there exists $N\in\N$ such that 
$$\|Df^N(x)|_E\|\|Df^{-N}(f^N(x))|_F\|\leq\frac{1}{2},$$
for every $x\in\Lambda$.
We say that $\Lambda$ admits a \emph{dominated
splitting} if there exists a decomposition of the tangent bundle $T_{\Lambda}M=\bigoplus_{l=1}^kE_l$ such that $E_l$ dominates $E_{l+1}$.

We say that a $f$-invariant subset $\Lambda$ is {\it partially hyperbolic} if it admits a dominated splitting $T_{\Lambda}M=E^s\oplus E^c_1\oplus\ldots\oplus E^c_k\oplus E^u$,
with at least one of the extremal bundles being non-trivial,
such that the extremal bundles have uniform contraction and expansion: there exist a constants $m\in\N$ such that for 
every $x\in M$:
\begin{align*}
\bullet \ &\|Df^m(x)v\|\leq 1/2 \text{  for each unitary } v\in E^s ,
\\
\bullet \ &\| Df^{-m}(x)v\|\leq 1/2 \text{  for each unitary } v\in E^u \\
\end{align*}
and the other bundles, which are called center bundles, do not contracts neither expands.

If all center bundles are trivial, then $\Lambda$ is called a {\it hyperbolic set}. 
Now, we say $\Lambda$ is {\it strongly partially hyperbolic} if  both extremal bundles and center bundle are non-trivial and moreover such that all of its center bundles are one-dimensional. In particular a strongly partially hyperbolic set is not hyperbolic.

We say that a diffeomorphism $f:M\rightarrow M$ is {\it partially hyperbolic} (resp. {\it strong partially hyperbolic} ) if $M$ is a partially hyperbolic (resp. strongly partially hyperbolic) set of $f$.  When $M$ is a hyperbolic set we say that $f$ is {\it Anosov}.

We remark now that strongly partially hyperbolic diffeomorphisms are by definition 
far from homoclinic tangencies, since all central sub bundles have dimension one.

Examples of partially hyperbolic diffeomorphisms with higher dimensional central 
directions can be given by deforming some linear Anosov diffeomorphisms as in Ma\~n\'e's 
example. For instance, let $A$ be a linear Anosov diffeomorphism with eigenvalues 
$\lambda_1<\lambda_2<\lambda_3<1<\lambda_4$ but, such that $\lambda_2$ and $\lambda_3$ are close to 1. Then we can create a pitchfork bifurcation, producing two fixed points $p$ and $q$ with eigenvalues $\mu_1(p)<1<\mu_2(p)<\mu_3(p)<\mu_4(p)$ and $\mu_1(q)<\mu_2(q)<1<\mu_3(q)<\mu_4(q)$, such that $\mu_3(q)$ is still close to 1. Moreover, as in Ma\~n\'e's argument \cite{M} we can guarantee that this diffeomorphism is transitive. Now we can perform another pitchfork bifurcation on $q$ producing two other fixed points $q_1$ and $q_2$ with eigenvalues $\mu_1(q_1)<\mu_2(q_1)<1<\mu_3(q_1)<\mu_4(q_1)$ and $\mu_1(q_2)<\mu_2(q_2)<\mu_3(q_2)<1<\mu_4(q_2)$.  Once again, this diffeomorphism is transitive. Now, since the bifurcations preservers the center unstable leaves, we can guarantee that there exists a dominated splitting $E^s\oplus E_1^c\oplus E_2^c\oplus E^u$, where $E^c_1$ is related to $\mu_2$ and $E^c_2$ is related to $\mu_3$. As in Ma\~n\'e's example, the unstable foliation will be minimal. In particular, it will be topologically mixing also.

\begin{remark}\label{r.hps} If $f$ is partially hyperbolic, by Theorem 6.1 of \cite{HPS} there exist strong stable and strong unstable foliations that integrate $E^s$ and $E^u$. More, precisely, for any point $x\in M$ there is a unique invariant local strong stable manifold $W_{loc}^{ss}(x)$ which is a smooth graph of a function $\phi_x: E^s\rightarrow E^c\oplus E^u$ (in local coordinates), and varies continuously with $x$. In particular, $W_{loc}^{ss}(x)$ has uniform size for every $x\in M$. The same holds for $W_{loc}^{uu}(x)$, integrating $E^u$.

Saturating these local manifolds, we obtain two foliations, that we denote by $\mathcal{F}^s$ and $\mathcal{F}^u$ respectively. Indeed, $\mathcal{F}^s(x)=\bigcup_{n\geq 0}f^{-n}(W^{ss}_{loc}(f^n(x))$. Analogous definition holds for $\mathcal{F}^{u}$.

\end{remark}

\subsection{Robustness and Genericity}

As mentioned before, we deal with the space $\diff^1(M)$ of $C^1$ diffeomorphisms over $M$ endowed with the $C^1$-topology. This is a Baire space. Thus any residual subset, i.e. a countable intersection of open and dense sets, is dense. When a property $P$ holds for any diffeomorphism in a fixed residual subset, we will say that $P$ holds generically. Or even, that a generic diffeomorphisms exhibits the property $P$.

On the other hand, we say that a property holds robustly for a diffeomorphism $f$ if there exists a neighbourhood $\mathcal{U}$ of $f$ such that the property holds for any diffeomorphism in $\mathcal{U}$. 

In this way, we say that a diffeomorphism $f\in\diff^1(M)$ is \emph{robustly transitive} if it admits
a neighborhood entirely formed by transitive diffeomorphisms. 

In this paper we let $\ST(M)$ denote the open set of $\diff^1(M)$ formed by robustly transitive diffeomorphisms
which are far from tangencies. Notice that being far from tangencies is, by definition, an open condition.
Also we define by $\mathcal{T}_{NH}(M)$ as the interior of robustly transitive strongly partially hyperbolic diffeomorphisms, which is a subset of  $\mathcal{T}(M)$. 

When dealing with properties which involves objects defined by the diffeomorphism itself we need to deal with the continuations of these objects.

For instance, when we say that a homoclinic class of $f$ is robustly topologically mixing, we are fixing a hyperbolic periodic point $p$ of $f$ and a neighbourhood $\mathcal{U}$ of $f$ such that for any $g\in  \mathcal{U}$ the continuation $p(g)$ of $p$ is defined and the homoclinic class $H(p(g),g)$ is topologically mixing, i.e. for any $U$ and $V$ open sets of $H(p(g),g)$ there exists $N>0$ such that for any $n\geq N$ we have $g^n(U)\cap V\neq \emptyset$.

Another example of a robust property is given by the following well known result which says that partial hyperbolicity is a robust property.

\begin{proposition}[p. 289 of \cite{BDV}]
\label{p.phpersiste}
Let $\Lambda$ be a (strongly) partially hyperbolic set for $f$. Then, there exists a neighborhood $U$ of $\Lambda$ and a $C^1$
neighborhood $\SU$ of $f$ such that every $g$-invariant set $\Gamma\subset U$, is (strongly) partially hyperbolic,  for every $g\in\SU$. 
\end{proposition}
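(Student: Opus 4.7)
The plan is to apply the classical cone field criterion for dominated splittings, the key observation being that the subbundles $E_l$ themselves have no preferred extension off $\Lambda$, but the \emph{cone conditions} they satisfy are open in the $C^0$-topology of the derivative. For each $j \in \{1,\dots,k-1\}$, I would group the splitting of $\Lambda$ as $E_j^- := E_1 \oplus \cdots \oplus E_j$ and $E_j^+ := E_{j+1} \oplus \cdots \oplus E_k$. The domination hypothesis yields, after passing to a common large iterate $N$, continuous cone fields $C_j^\pm(x)$ on $\Lambda$ of small opening around $E_j^\pm$ with
\[
Df^N(C_j^+(x)) \subset \operatorname{int} C_j^+(f^N(x)), \qquad Df^{-N}(C_j^-(x)) \subset \operatorname{int} C_j^-(f^{-N}(x)),
\]
both with cone contraction factor at most $1/2$. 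In the (strongly) partially hyperbolic case, on the extremal cones $C_0^-$ and $C_{k-1}^+$ there is an additional uniform $m \in \N$ such that $\|Df^m v\| \le \tfrac{1}{2} \|v\|$ and $\|Df^{-m} w\| \le \tfrac{1}{2}\|w\|$ for $v \in C_0^-$, $w \in C_{k-1}^+$.

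Next I would extend each $C_j^\pm$ continuously to a tubular neighborhood $U$ of $\Lambda$ by covering $\Lambda$ with finitely many trivializations of $TM$, extending the reference frames continuously inside each chart, and gluing via a partition of unity (possibly shrinking the openings). Since all the inequalities above are open in $(g,x)$ with respect to the $C^1$-topology on $g$ and the topology on $M$, there exist a possibly smaller $U$ and a $C^1$-neighborhood $\SU$ of $f$ such that the same cone invariance and contraction estimates hold for $Dg$, for every $g \in \SU$, at every $x \in U$ whose iterate remains in $U$. For any $g$-invariant $\Gamma \subset U$, the strict forward/backward contraction of cones yields $Dg$-invariant continuous subbundles
\[
\widetilde{E}_j^+(x) := \bigcap_{n \ge 0} Dg^{nN}\!\left(C_j^+(g^{-nN}(x))\right), \qquad \widetilde{E}_j^-(x) := \bigcap_{n \ge 0} Dg^{-nN}\!\left(C_j^-(g^{nN}(x))\right),
\]
of the same dimensions as $E_j^\pm$. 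Setting $\widetilde{E}_l := \widetilde{E}_{l-1}^+ \cap \widetilde{E}_l^-$ (with $\widetilde{E}_0^+ := T_\Gamma M$, $\widetilde{E}_k^- := T_\Gamma M$) gives the dominated splitting on $\Gamma$, while the uniform bounds on $Dg^m|_{\widetilde{E}^s}$ and $Dg^{-m}|_{\widetilde{E}^u}$ follow from the extremal cone estimates. The one-dimensionality of each center bundle in the strong case is automatic since $\dim \widetilde{E}_l = \dim E_l$.

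The main technical point is the extension of the cone fields across $\p\Lambda$: since $\Lambda$ may be topologically complicated and the bundles $E_l$ are not the restriction of any globally defined bundle, everything has to be formulated in terms of cones coming from extended reference frames, and one must verify that the open cone conditions survive both the extension and the passage to a $C^1$-perturbation. Once this standard piece of bookkeeping is done, the intersection construction above produces the $Dg$-invariant splitting on every $\Gamma \subset U$ uniformly in $g \in \SU$, which is exactly the statement of the proposition.
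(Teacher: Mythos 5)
The paper does not actually prove this proposition---it is imported verbatim from \cite{BDV} (p.~289)---and your cone-field argument is precisely the standard proof given there: domination and the extremal contraction/expansion are encoded by strictly invariant continuous cone fields, these extend to a neighborhood $U$ of $\Lambda$ where the defining inequalities remain valid by openness in $(x,g)$, and the splitting over any invariant $\Gamma\subset U$ is recovered by intersecting iterated cones. Your write-up is correct modulo two cosmetic points: the extremal cones should be indexed $C_1^-$ (around $E^s$) rather than $C_0^-$, and on a cone around $E^s$ one only gets $\|Df^m v\|\leq \lambda\|v\|$ for some $\lambda<1$ close to $1/2$ (exact contraction by $1/2$ holds only on $E^s$ itself), which is harmless since the definition of partial hyperbolicity is insensitive to the choice of $\lambda<1$ and $m$.
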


\section{Some Tools}

In this section, we collect some results that will be used in the proofs of the main results. 

\subsection{Perturbative Tools}

We start with Franks' lemma \cite{F}. This lemma enable us to deal with some non-linear problems using linear arguments.

\begin{theorem}[Franks lemma]
\label{l.franks}
Let $f\in \diff^1(M)$ and $\mathcal{U}$ be a $C^1$-neighborhood of $f$ in $\diff^1(M)$. 
Then, there exist a neighborhood $\mathcal{U}_0\subset \mathcal{U}$ of $f$ and $\delta>0$
such that if $g\in \mathcal{U}_0(f)$, $S=\{p_1,\dots,p_m\}\subset M$ and 
$\{L_i:T_{p_i}M\to T_{g(p_{i})}M\}_{i=1}^{m}$ are linear maps 
satisfying $\|L_i-Dg(p_i)\|\leq\delta$ for $i=1,\dots m$ then there exists 
$h\in\mathcal{U}(f)$ coinciding with $g$ outside any prescribed neighborhood of $S$ and 
such that $h(p_i)=g(p_i)$ and $Dh(p_i)=L_i$.
\end{theorem}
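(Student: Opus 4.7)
The strategy is the classical one: localize the problem in exponential charts, interpolate with a bump function between $g$ and the prescribed linear map, and show that the resulting $C^1$-distance is controlled by $\delta$ independently of the support radius. I will carry out the construction one point at a time, shrinking the supports so that the single-point perturbations are independent of one another.

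First I would fix an $\varepsilon>0$ with $B_{\varepsilon}(f)\subset\mathcal{U}$ and define $\mathcal{U}_0:=B_{\varepsilon/2}(f)$; the target is to produce $h$ with $d_{C^1}(h,g)<\varepsilon/2$ so that $h\in\mathcal U$ by the triangle inequality. Given a prescribed neighborhood $V$ of $S$ and $g\in\mathcal{U}_0$, I would choose pairwise disjoint geodesic balls $B_i=B(p_i,r)\subset V$ with $r$ small enough that (i) on each $B_i$ the exponential chart at $p_i$ and the chart at $g(p_i)$ are well defined diffeomorphisms, with derivative norms bounded by a constant $K$ depending only on $M$, and (ii) $g(B_i)$ lies in the domain of the chart at $g(p_i)$. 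In these charts, $g$ becomes a map $\tilde g\colon B_r(0)\subset T_{p_i}M\to T_{g(p_i)}M$ with $\tilde g(0)=0$.

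I would then fix once and for all a smooth bump $\phi\colon[0,\infty)\to[0,1]$ with $\phi\equiv 1$ on $[0,1/2]$, $\phi\equiv 0$ on $[1,\infty)$, and set $C_0:=\sup|\phi'|$. The perturbation is defined in the chart by
\[
\tilde h(v)\;:=\;\tilde g(v)\;+\;\phi\!\left(\tfrac{|v|}{r}\right)\bigl(L_i-Dg(p_i)\bigr)(v),
\]
and extended to $M$ so that $\tilde h=\tilde g$ outside $B_i$. By construction $h(p_i)=g(p_i)$ and $Dh(p_i)=Dg(p_i)+(L_i-Dg(p_i))=L_i$. For the $C^1$ estimate, a direct computation in the chart gives
\[
D\tilde h(v)-D\tilde g(v)\;=\;\phi\!\left(\tfrac{|v|}{r}\right)\bigl(L_i-Dg(p_i)\bigr)\;+\;\bigl(L_i-Dg(p_i)\bigr)(v)\otimes\nabla\!\left[\phi\!\left(\tfrac{|v|}{r}\right)\right].
\]
The first summand is bounded in norm by $\|L_i-Dg(p_i)\|\leq\delta$; in the second, $|v|\leq r$ on the support while the gradient of $\phi(|v|/r)$ has norm at most $C_0/r$, producing a product bounded by $C_0\delta$. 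Transporting back to $M$ by the charts absorbs at worst a factor $K^2$, so that $\|Dh-Dg\|_{C^0}\leq K^2(1+C_0)\delta$ on $B_i$, and trivially zero outside. The $C^0$-difference is even smaller, of order $r\delta$. It therefore suffices to choose
\[
\delta\;<\;\frac{\varepsilon}{2K^2(1+C_0)},
\]
which depends only on $f$ and $\mathcal{U}$, not on $S$ or on $g\in\mathcal{U}_0$.

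The main subtlety I anticipate is precisely the scale cancellation in the estimate above: one must notice that although $\|\nabla\phi(\cdot/r)\|\sim 1/r$ blows up as the support shrinks, it is paired with the displacement $v$ of size $\lesssim r$, so the bound on $\|Dh-Dg\|$ is independent of $r$. This is what allows us to freely shrink the supports to fit inside any prescribed neighborhood of $S$ without spoiling the $C^1$ proximity. Everything else (disjointness of the $B_i$, smoothness of $h$, the identity $h(p_i)=g(p_i)$) is immediate from the construction.
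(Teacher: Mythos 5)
The paper does not prove this statement: it is quoted verbatim as Franks' lemma with a citation to \cite{F}, so there is no internal argument to compare against. Your proposal is the standard proof (essentially Franks' original one): pass to exponential charts, interpolate between $g$ and the affine map with derivative $L_i$ via a bump function supported in a small ball, and observe that in the error term $\bigl(L_i-Dg(p_i)\bigr)(v)\otimes\nabla\phi(|v|/r)$ the factor $|v|\lesssim r$ cancels the factor $1/r$ from the gradient of the rescaled bump, so the $C^1$-size of the perturbation is $O(\delta)$ uniformly in $r$. That is exactly the right mechanism, and it is what lets $\delta$ be chosen depending only on $\mathcal{U}$ while the supports shrink into any prescribed neighborhood of $S$; your bookkeeping of the constants is correct up to harmless adjustments (e.g.\ adding the $O(r\delta)$ zeroth-order term to the budget). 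The only point you pass over in silence is that the resulting $C^1$ map $h$ is still a diffeomorphism. This is standard but should be said: $\diff^1(M)$ is open in the space of $C^1$ maps of a compact manifold, and the relevant openness radius is uniform over $g\in\mathcal{U}_0$ because $\sup_x\|Dg(x)^{-1}\|$ is uniformly bounded on a $C^1$-neighborhood of $f$; shrinking $\varepsilon$ (hence $\delta$) accordingly makes $h$ a diffeomorphism. With that one sentence added, the argument is complete.
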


One of the main applications of Franks lemma is to change the index of a periodic orbit, after a perturbation, if the Lyapunov exponents of  the orbit is weak enough.  More precisely, we can prove the following:

\begin{lemma}
Let $f\in \diff^1(M)$ having a sequence of hyperbolic periodic points $p_n$ with some
index $s+1$, having negative Lyapunov exponents arbitrarily close to zero. Then, 
there exists $g$ arbitrarily close to $f$ having hyperbolic periodic points of indices $s$ and $s+1$. 
\label{change.index}\end{lemma}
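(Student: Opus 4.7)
The plan is to use Franks' lemma to perturb $Df$ along one carefully chosen orbit $O(p_N)$ so that the weakest negative Lyapunov exponent crosses zero, producing a hyperbolic point of index $s$, while leaving all other orbits (in particular another $p_n$ with $n\neq N$) untouched, so that one of them survives as a hyperbolic continuation of index $s+1$.

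More concretely, let $\mathcal{U}$ be any $C^1$-neighborhood of $f$ and let $\mathcal{U}_0\subset\mathcal{U}$ and $\delta>0$ be provided by Theorem \ref{l.franks}. First I would fix $\epsilon>0$ much smaller than $\delta/(1+\sup\|Df\|)$, and then pick $N$ large enough so that $p_N$ has a negative Lyapunov exponent $\lambda_N$ with $|\lambda_N|<\epsilon/2$. Denote by $v\in E^s(p_N)$ a unit eigenvector for the corresponding eigenvalue of $Df^{\tau(p_N)}(p_N)$, and propagate it along the orbit by $v_j=Df^j(p_N)v/\|Df^j(p_N)v\|$, $j=0,\dots,\tau(p_N)-1$. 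At each point $f^j(p_N)$ I would define $L_j:T_{f^j(p_N)}M\to T_{f^{j+1}(p_N)}M$ by
\[
L_j = Df(f^j(p_N))\circ (I+\epsilon\, P_j),
\]
where $P_j$ is the rank-one orthogonal projection onto the line spanned by $v_j$. Then $\|L_j - Df(f^j(p_N))\|\leq \epsilon \sup\|Df\|<\delta$, so Franks' lemma gives a diffeomorphism $g\in\mathcal{U}$ coinciding with $f$ outside an arbitrarily small neighborhood $V$ of $O(p_N)$ and such that $Dg(f^j(p_N))=L_j$.

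By construction $g^{\tau(p_N)}(p_N)=p_N$ and $Dg^{\tau(p_N)}(p_N)$ has the same invariant splitting as $Df^{\tau(p_N)}(p_N)$ along the chosen line, but with the eigenvalue in the direction of $v$ multiplied by $(1+\epsilon)^{\tau(p_N)}$. The new modulus is
\[
(1+\epsilon)^{\tau(p_N)}\,e^{\lambda_N\tau(p_N)}=\bigl((1+\epsilon)e^{\lambda_N}\bigr)^{\tau(p_N)}>1,
\]
since $(1+\epsilon)e^{\lambda_N}>1+\epsilon/2>1$; the other eigenvalues being $C^0$-close to those of $Df^{\tau(p_N)}(p_N)$ remain off the unit circle. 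Hence $p_N$ is hyperbolic for $g$ with index $s$. Now choose $M\neq N$ with $O(p_M)\cap V=\emptyset$; shrinking $V$ if necessary this is possible since, by discreteness of periodic orbits of bounded period or by passing to $M$ large, the orbits $O(p_n)$ are asymptotically separated from any fixed compact neighborhood of $O(p_N)$. Outside $V$ we have $g=f$, so $p_M$ remains a hyperbolic periodic point of $g$ with its original index $s+1$.

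The main obstacle is bookkeeping: one must ensure both that the single perturbation along $O(p_N)$ is within the Franks' size bound $\delta$ and simultaneously strong enough, cumulated over $\tau(p_N)$ iterates, to push the borderline eigenvalue past $1$; this is the reason for choosing $N$ only after $\epsilon$ (hence $\delta$) is fixed, using that $|\lambda_N|\to 0$ by hypothesis. The separation of $O(p_N)$ from some other $O(p_M)$ is a minor but necessary point, handled by taking $M$ arbitrarily larger than $N$ and localising $V$ enough so that it avoids $O(p_M)$.
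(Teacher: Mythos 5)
Your proposal is essentially the paper's own argument: use Franks' lemma along a single orbit $O(p_N)$ with a weak negative exponent to inflate the derivative in that one direction by a factor $(1+\epsilon)$ per iterate, so that the cumulative factor $(1+\epsilon)^{\tau(p_N)}$ beats $e^{-\lambda_N\tau(p_N)}$ precisely because $\epsilon$ is fixed by $\delta$ \emph{before} $N$ is chosen, and then keep a second point of index $s+1$ (the paper invokes the continuation of a fixed $p_1$, you localise the support of the perturbation away from some $O(p_M)$; both work). The one step you should tighten is the claim that the other eigenvalues ``being $C^1$-close remain off the unit circle'': pointwise closeness of the $L_j$ to $Df$ does \emph{not} control the spectrum of the $\tau(p_N)$-fold product when $\tau(p_N)$ is large (that is exactly the mechanism you are exploiting in the $v$-direction), so closeness is not a valid reason. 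What saves your construction is that $I+\epsilon P_j$ fixes the invariant line $\langle v_j\rangle$ and induces the identity on the quotient $T_{f^j(p_N)}M/\langle v_j\rangle$, so the complementary part of the spectrum of $Dg^{\tau(p_N)}(p_N)$ is \emph{exactly} that of $Df^{\tau(p_N)}(p_N)$ with $\lambda_N$ removed — the paper achieves the same by defining $L_i$ to agree with $Df$ on the other eigenspaces. (Both you and the paper also tacitly assume the weak eigenvalue is real and simple; the complex case needs the same trick on a two-dimensional invariant subspace.)
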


{\it Proof:} Given a neighborhood $\mathcal{U}$ of $f$ let us consider $\delta>0$ given for this neighborhood and $U_0$ another small enough neighborhood of $f$. We will suppose that the sequence of periodic points $p_n$ is such that the smallest eigenvalue $\lambda_{p_n}$ of $Df^{\tau(p_n)}$ with absolute value smaller than 1, has multiplicity one. The argument is similar in the other cases. 

Our hypothesis says that
$$
\frac{1}{\tau(p_n)}\log \|Df^{\tau(p_n)}|E^s(p_n)\|=\frac{1}{\tau(p_n)}\log |\lambda_{p_n}|
$$    
approaches zero as $n$ grows.  
Now, let us consider  $E_n$ as the eigenspace of the eigenvalues $\lambda_{p_n}$, 
and $\{E_l\}$ the other eigenspaces.  
We can define linear maps $L_i:T_{f^i(p)}M\rightarrow T_{f^{i+1}(p)}M$, equal to $Df(f^i(p))$
in all subspaces $Df^i(p)E_l$, but in $Df^i(p)E_n$ we choose $L_i$ satisfying 
$\|L_i | \ Df^iE_n\|=(1+\alpha)\|Df(f^i(p))| Df^iE_n\|$, where $\alpha>0$ depends on
$\delta>0$. Then, $L_i$ is $\delta-$close to $Df(f^i(p))$, 
and also preserves the eigenspace $Df^i(p)E_n$.

Hence, using Franks lemma we can find $g\in \mathcal{U}$ such that $p_n$ still is a hyperbolic periodic point and moreover $Dg(f^i(p))=L_i$, where $g$ depends on the periodic point $p_n$. In particular, $E_n$ is a invariant subspace of $T_{p_n}M$ for $Dg^{\tau(p_n)}$ and moreover:
$$
\|Dg^{\tau(p)}(p_n)| E_n\|=(1+\alpha)^{\tau(p_n)}\lambda_n.
$$

Hence, by hypothesis, we can choose $p$ equal some $p_n$,  in order to have, after the above perturbation:
$$
\frac{1}{\tau(p)}\log \|Dg^{\tau(p)}|E_n(p)\|>0. 
$$ 
Since $L_i$ can be chosen such that the other Lyapunov exponents of  $p$ keep unchanged, we have that $p$
has index $s$. To finish the proof, we just observe that, Franks lemma changes the initial diffeomorphism only in a arbitrary neighborhood of the orbit of $p$, therefore the neighborhood $\mathcal{U}$ could be chosen such that the hyperbolic periodic point $p_1$ of $f$ has a continuation, which implies that $p_1(g)$ is also a hyperbolic periodic point of $g$ with index $s+1$. 
$\hfill\square$

Another result that we shall use is Hayashi's connecting lemma \cite{H}. This will be helpful to create some heterodimensional cycles.

\begin{theorem}[$C^1$-connecting lemma]
Let $f \in \diff^1(M)$ and $p_1,\, p_2$ hyperbolic periodic points of $f$, such that there exist sequences  $y_n\in M$ and positive integers 
$k_n$ such that:
\begin{itemize}
\item $y_n\rightarrow y \in W_{loc}^u(p_1, f))$, $y\neq p_1$; and
\item $f^{k_n}(y_n)\rightarrow x \in W_{loc}^s(p_2, f))$, $x\neq p_2$.
\end{itemize}
Then, there exists a $C^1$  diffeomorphism  $g$   $C^1-$close to $f$ such that $W^u(p_1,g)$ and $W^s(p_2,g)$ have a non empty intersection close to  $y$. \label{connecting lema}\end{theorem}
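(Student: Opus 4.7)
I would follow the perturbation-box strategy due to Hayashi (with later simplifications by Wen and Xia). The guiding idea is to replace the pseudo-orbit coming from the sequence $(y_n, f^{k_n}(y_n))$ by an actual orbit, paying only a small $C^1$ cost, using that any nearby pair of points can be glued by a localized perturbation via Franks' Lemma (Theorem~\ref{l.franks}).

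Fix a $C^1$-neighborhood $\mathcal{U}$ of $f$ and let $\delta>0$ and $\mathcal{U}_0\subset\mathcal{U}$ be the constants supplied by Franks' Lemma. The first step is to translate $\delta$ into a \emph{geometric} perturbation size: by compactness and uniform continuity of $Df$, there exist constants $\varepsilon_0>0$ and $N_0\in\mathbb{N}$ so that for any point $z\notin\{p_1,p_2\}$ and any $z'$ with $\dist(z,z')<\varepsilon_0$, one can produce $h\in\mathcal{U}$ supported in an arbitrarily small neighborhood of the orbit segment $\{z,f(z),\ldots,f^{N_0}(z)\}$ with $f^{N_0}(z)=h^{N_0}(z')$. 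This is the elementary perturbation step; its construction is routine from Franks' Lemma and uniform continuity, provided the support is kept away from $p_1$ and $p_2$.

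Next I would set up the \emph{tiling}. Choose a finite cover of $M\setminus (B(p_1,r)\cup B(p_2,r))$ (for small $r>0$) by open sets $\{U_i\}$ of diameter much smaller than $\varepsilon_0$, and refine so that for any $x\in M$ the orbit segment of length $N_0$ starting at $x$ avoids any single $U_i$ after leaving it for at most one return. The key combinatorial statement (Hayashi's selection lemma) is then: there exists $K\in\mathbb{N}$ such that, given any finite segment of a true $f$-orbit, its intersection with the tile structure can be reorganized into at most $K$ disjoint ``blocks'' where a single Franks perturbation suffices to shift from one block to the next. The main obstacle—and the technical heart of the proof—is precisely this selection step: one must ensure that the local perturbations made to realize the jumps do not interact (their supports must be pairwise disjoint under iteration), so that composing them yields a single $g\in\mathcal{U}$.

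With the selection lemma in hand, I would conclude as follows. For $n$ large, the point $y_n$ lies in a small neighborhood of $y\in W^u_{loc}(p_1,f)$ and $f^{k_n}(y_n)$ lies close to $x\in W^s_{loc}(p_2,f)$. The segment $y_n,f(y_n),\ldots,f^{k_n}(y_n)$ is an actual orbit that nearly realizes the desired connection: pushing $y$ forward along $f$, at some point its iterates enter and then exit the neighborhood of this orbit segment. Apply the selection lemma to choose at most $K$ localized perturbations, each of $C^1$-size $<\delta$ and with pairwise disjoint forward-iterated supports, whose composition $g\in\mathcal{U}$ modifies the orbit of $y$ to reach a point arbitrarily close to $x$. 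Since $x\in W^s_{loc}(p_2,g)$ (by persistence of the local stable manifold under small perturbations), this produces the required nonempty transverse-free intersection $W^u(p_1,g)\cap W^s(p_2,g)$ near $y$. The hardest step throughout is the disjointness of perturbation supports under iteration; every other ingredient (Franks' Lemma, persistence of local invariant manifolds, uniform continuity) is standard.
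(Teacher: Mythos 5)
This statement is not proved in the paper at all: it is Hayashi's $C^1$-connecting lemma, quoted verbatim from \cite{H} and used as a black box. So there is no ``paper proof'' to compare against; what you have written is an outline of the known proof from the literature (Hayashi's original argument, as streamlined by Wen and Xia). As a description of that strategy your sketch is accurate: the elementary Franks-type perturbation that realizes a bounded jump over $N_0$ iterates, the covering of the relevant region by perturbation boxes avoiding $p_1$ and $p_2$, and the need to select a sub-orbit so that the supports of the individual perturbations are pairwise disjoint under iteration are indeed the three ingredients, and you correctly identify the last one as the crux.

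However, as a proof the proposal has a genuine gap precisely at that crux. The ``selection lemma'' you invoke --- that the true orbit segment $y_n, f(y_n),\ldots, f^{k_n}(y_n)$ can be shortened to a pseudo-orbit with at most $K$ jumps, each jump realizable inside a perturbation box whose iterates up to time $N_0$ are disjoint from all the other boxes used --- is not a routine consequence of uniform continuity; it is the entire content of Hayashi's theorem and occupies the bulk of his paper. The difficulty is that the orbit segment may return to a given box many times at incommensurate scales, and one must discard returns in a consistent way without destroying the connection from a neighborhood of $y$ to a neighborhood of $x$; no argument for this is given. Two smaller points: the constant $\varepsilon_0$ in your elementary perturbation step cannot be uniform in the way stated (what is uniform is the \emph{ratio} between the displacement achieved and the diameter of the supporting box, which is why the boxes must be taken along the orbit at controlled scales), and the conclusion should read simply that $W^u(p_1,g)\cap W^s(p_2,g)\neq\emptyset$ near $y$ --- no transversality is claimed or obtained. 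Since the paper itself treats this result as an external citation, the appropriate resolution is to do the same rather than to reprove it.
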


As it is well known, this result implies that if $f$ is a generic diffeomorphism having a non-hyperbolic 
homoclinic class which contains two periodic points $p$ and $q$ with different indices then 
there exist arbitrarily small perturbations of $f$ such that $p$ and $q$ belongs to a heterodimensional cycle.

\subsection{Generic Results}

We start this subsection with one of the main generic result used in this paper, which is a result 
of Abdenur and Crovisier, Theorem 3 in \cite{AC}. 
They prove the existence of a decomposition of any generic isolated chain-transitive set.
Since we solely are interested here in the study of isolated homoclinic classes, 
we quote their result only for homoclinic classes.  

\begin{theorem}[Theorem 3 in \cite{AC}] 
There exists a residual subset $\mathcal{R}\subset \diff^1(M)$
such that for every $f\in\SR$, any isolated homoclinic class $H(p,f)$ of a hyperbolic periodic point $p$ of $f$, decomposes uniquely as the finite union $H(p)=\Lambda_1\cup \ldots \cup \Lambda_l$, of disjoint compact sets on each of which $f^l$ is topologically mixing. Moreover, $l$ is the smallest positive integer such that $W^u(f^l(p))$ has a non empty transversal intersection with $W^s(p)$.
\label{t.acmixing}
\end{theorem}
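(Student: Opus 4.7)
The plan is to follow the classical Bowen spectral decomposition for hyperbolic basic sets, adapted to the $C^1$-generic setting where $H(p)$ need not be hyperbolic. The key generic input is the Bonatti--Crovisier type result that for $f$ in a suitable residual subset the homoclinic class $H(p)$ coincides with the closure of the hyperbolic periodic points of $f$ homoclinically related to $p$; this reduces the analysis to the dense set of such periodic points. Define
\[
\mathcal{N} := \{ n \geq 1 : W^u(f^n(p)) \pitchfork W^s(p) \neq \emptyset \}
\]
and let $l := \min \mathcal{N}$. The $\lambda$-lemma (inclination lemma) shows that $\mathcal{N}$ is closed under addition: pushing forward a disk transverse to $W^s(p)$ inside $W^u(f^{n_1}(p))$ accumulates on $W^u(p)$, and combining with a disk realizing $n_2 \in \mathcal{N}$ produces a transverse intersection at time $n_1 + n_2$. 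Then I would show $\mathcal{N} = l\mathbb{N}_{\geq 1}$: one inclusion is the semigroup property, and the other is established by a Bezout-style argument where the connecting lemma (Theorem \ref{connecting lema}) is invoked to promote approximate intersections produced by subtraction of periods into genuine transverse intersections, contradicting minimality of $l$ if some $m \in \mathcal{N}$ were not a multiple of $l$.

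Next, I would construct the decomposition explicitly. Assign to each hyperbolic periodic point $q$ homoclinically related to $p$ a class $\varphi(q) \in \mathbb{Z}/l\mathbb{Z}$ by setting $\varphi(q) \equiv n \pmod{l}$, where $n$ is any integer such that $W^u(f^n(p)) \pitchfork W^s(q) \neq \emptyset$. The characterization $\mathcal{N} = l\mathbb{N}_{\geq 1}$ ensures well-definedness: any two admissible $n$ differ by an element of $\mathcal{N}$, hence by a multiple of $l$. By construction $\varphi(f(q)) \equiv \varphi(q) + 1 \pmod{l}$. Define $\Lambda_i := \overline{\varphi^{-1}(i)}$ for $i = 0, \dots, l-1$. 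Then $f(\Lambda_i) = \Lambda_{i+1 \bmod l}$, the $\Lambda_i$ cover $H(p)$ by the density of homoclinically related periodic points, and uniqueness follows from the intrinsic characterization of $\varphi$. Disjointness of the $\Lambda_i$ is a consequence of the minimality of $l$: a transverse intersection between manifolds of periodic points with different $\varphi$-values would yield, via the $\lambda$-lemma plus the connecting lemma, an element of $\mathcal{N}$ strictly smaller than $l$.

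The final step, and the main obstacle, is to prove that $f^l$ restricted to each $\Lambda_i$ is topologically mixing. It suffices to treat $\Lambda_0$. Given two nonempty open sets $U, V \subset \Lambda_0$, use density to pick homoclinically related periodic points $q_U \in U$ and $q_V \in V$ with $\varphi(q_U) = \varphi(q_V) = 0$. For any given large $m$, combine a transverse heteroclinic $W^u(q_U) \pitchfork W^s(p)$, the fact that $ml \in \mathcal{N}$ (provided by the semigroup structure once $m$ is large enough), and a transverse heteroclinic $W^u(f^{ml}(p)) \pitchfork W^s(q_V)$ via two successive applications of the $\lambda$-lemma to produce a transverse intersection point of $f^{ml}(U)$ and $V$. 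The subtlety --- and the heart of the argument --- is that one needs this intersection for \emph{every} sufficiently large $m$, not merely along a subsequence; this is exactly what the identification $\mathcal{N} = l\mathbb{N}_{\geq 1}$ provides, since it guarantees the availability of an appropriate transverse heteroclinic at every sufficiently large multiple of $l$. This is where the generic hypothesis is used most essentially, since outside the generic setting one can only control intersections up to arbitrarily small perturbations.
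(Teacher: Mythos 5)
A preliminary remark: the paper does not prove this statement at all --- it is quoted verbatim as Theorem 3 of \cite{AC} and used as a black box --- so there is no in-paper proof to compare yours against. Your skeleton (the set $N(p)=\{n:W^u(f^n(p))\pitchfork W^s(p)\neq\emptyset\}$ equals $l\mathbb{Z}$, the residue map $\varphi$ on periodic points homoclinically related to $p$, the pieces $\Lambda_i=\overline{\varphi^{-1}(i)}$) is indeed the architecture of the Abdenur--Crovisier argument, but two of your three key steps have genuine gaps. First, you invoke the connecting lemma both for $\mathcal{N}=l\mathbb{N}_{\geq 1}$ and for disjointness. The connecting lemma perturbs the diffeomorphism: it produces intersections for some $g$ near $f$, not for the fixed generic $f$ under consideration, and you cannot perturb and remain inside the residual set. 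For the group structure no perturbation is needed: $N(p)$ contains $0$, is invariant under translation by $\tau(p)\mathbb{Z}$, and is closed under addition by the $\lambda$-lemma, so $-n\equiv(\tau(p)-1)n\pmod{\tau(p)}$ lies in $N(p)$ whenever $n$ does; this is exactly Proposition \ref{prop.bla}, valid for every diffeomorphism. (Likewise, the fact that $H(p)$ is the closure of the periodic points homoclinically related to $p$ is not a generic input but holds always.) Where genericity and the isolation hypothesis --- which your proof never uses --- are truly needed is the disjointness of the closures $\Lambda_i$: your argument only excludes a transverse intersection between invariant manifolds of periodic points with distinct residues, but disjointness requires excluding that two sequences of periodic points with distinct $\varphi$-values accumulate on a common point of $H(p)$, and that is a statement about closures, not about heteroclinic intersections.

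The second gap is the mixing step. Knowing that $W^u(f^{ml}(p))\pitchfork W^s(q_V)\neq\emptyset$ for \emph{every} large $m$ does not by itself give $f^{ml}(U)\cap V\neq\emptyset$ in $\Lambda_0$ for every large $m$: the $\lambda$-lemma yields intersection points only at times of the form (fixed transition lengths) plus (multiples of the periods of the intermediate periodic orbits), and one must synchronize these data so as to realize \emph{every} sufficiently large integer congruent to $0$ modulo $l$, while also checking that the resulting intersection point actually lies in the piece $\Lambda_0$. This synchronization is precisely the combinatorial content of the pseudo-orbit construction in Lemma \ref{horseshoemixing} and of the large periods property of Section \ref{lpp}; it is the machinery this paper develops exactly to supply the step you assert in one sentence. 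If you carry out that construction (producing, for every large $n\in l\mathbb{N}$, a periodic point of $f^n$ homoclinically related to $p$ whose orbit is $\eps$-dense), the mixing step goes through via Lemma \ref{criteriopramixing}; without it, the claim ``for every $m$, not merely along a subsequence'' remains unproved.
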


As an application, they obtain that generically any transitive diffeomorphism is topologically mixing.

The positive integer $l$ in the previous theorem is, in fact, the period of the homoclinic class,  $l(O(p)).$ 
This number gives a nice information about the intersections between stable and unstable manifolds of hyperbolic  periodic points homoclinically related to $p$. More precisely:

Since for any two periodic points $p_1$ and $p_2$ which are homoclinically related their homoclinic classes $H(p_1)$ and $H(p_2)$ are equal we can recast the following result of \cite{AC} as:

\begin{proposition}[Proposition 2.1 in \cite{AC}]
Consider hyperbolic periodic points $p_1$ and $p_2$ which are homoclinically related to $p$, and such that $W^u(p_1)\pitchfork W^s(p_2)\neq \emptyset$. Then $W^u(f^n(p_1))\pitchfork W^s(p_2)\neq \emptyset$ if, and only if, $n$ belongs to the group $l(O(p))\Z$. 
\label{prop.bla}\end{proposition}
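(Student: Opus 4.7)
My plan is to establish both directions of the biconditional by exploiting the spectral decomposition from Theorem~\ref{t.acmixing}: write $H(p) = \Lambda_1 \sqcup \cdots \sqcup \Lambda_l$ with $l = l(O(p))$, where $f$ cyclically permutes the compact pieces $\Lambda_j$ and $f^l|_{\Lambda_j}$ is topologically mixing. Denote $E(q,q') := \{n \in \Z : W^u(f^n(q)) \pitchfork W^s(q') \neq \emptyset\}$ for hyperbolic periodic points $q, q'$ homoclinically related to $p$. The hypothesis reads $0 \in E(p_1,p_2)$, and the goal is $E(p_1,p_2) = l\Z$.

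For the direction ``$\Rightarrow$'', I would argue that every transverse intersection $x \in W^u(f^n(p_1)) \pitchfork W^s(p_2)$ lies in $H(p)$ and hence in a unique piece $\Lambda_{j(x)}$. Iterating backwards by $f^{-k\tau(p_1)}$ preserves $\Lambda_{j(x)}$ (since $\tau(p_1)$ is a multiple of $l$ and each $\Lambda_j$ is $f^l$-invariant), so the limit $f^n(p_1)$ lies in $\Lambda_{j(x)}$; by forward iteration under $f^{k\tau(p_2)}$, likewise $p_2 \in \Lambda_{j(x)}$. Applying the same reasoning to the hypothesis gives $j_0$ with $p_1, p_2 \in \Lambda_{j_0}$, and since $f$ cyclically permutes the pieces with period $l$, the relation $f^n(p_1) \in \Lambda_{j(x)} = \Lambda_{j_0}$ forces $n \in l\Z$.

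For the harder direction ``$\Leftarrow$'', my main tool is the $\lambda$-lemma, from which I would derive the composition property: if $n_1 \in E(q,q')$ and $n_2 \in E(q',q'')$, then $n_1 + n_2 + k\tau(q') \in E(q,q'')$ for all sufficiently large $k$. Indeed, given transverse intersections $x \in W^u(f^{n_1}(q)) \pitchfork W^s(q')$ and $y \in W^u(f^{n_2}(q')) \pitchfork W^s(q'')$, the $\lambda$-lemma guarantees that for large $k$ the manifold $W^u(f^{n_1+k\tau(q')}(q)) = f^{k\tau(q')}(W^u(f^{n_1}(q)))$ carries a disk $C^1$-close to any prescribed compact disk in $W^u(q')$; choosing this disk to contain $f^{-n_2}(y)$ and to be transversal to $W^s(f^{-n_2}(q''))$ there, and then applying $f^{n_2}$, yields the claim. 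Since $l = \gcd\{\tau(q) : q$ homoclinically related to $p\}$, a standard finite-subset-plus-Bezout argument produces hyperbolic periodic points $q_1,\dots,q_N$ (all homoclinically related to $p$) with $\gcd(\tau(p_1),\tau(p_2),\tau(q_1),\dots,\tau(q_N)) = l$. Chaining compositions along the loop $p_1 \to q_1 \to \cdots \to q_N \to p_2$ produces elements of $E(p_1,p_2)$ of the form $a + \sum_{i=1}^{N} k_i \tau(q_i)$ for sufficiently large $k_i$, where $a \in l\Z$ by the forward direction applied to each pair in the chain. Combined with the (bidirectional) invariance of $E(p_1,p_2)$ under translations by $\tau(p_1)$ and $\tau(p_2)$ (immediate, since $f^{\tau(p_i)}$ fixes the relevant periodic manifolds), Bezout then lets one realize every element of $l\Z$, yielding $l\Z \subset E(p_1,p_2)$.

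I expect the main obstacle to be the reverse direction: arranging the chain of compositions so that the ``sufficiently large $k_i$'' thresholds can be met simultaneously while still producing all of $l\Z$ (rather than merely a proper subgroup $d\Z \subsetneq l\Z$) after absorbing translations by $\tau(p_1)$ and $\tau(p_2)$. The essential point is that bidirectional invariance lets one rewind above-threshold arithmetic progressions into complete cosets, while Bezout provides the freedom to distribute the $k_i$'s across the $q_i$'s.
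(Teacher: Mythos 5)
The paper does not prove this statement: it is quoted verbatim from Proposition~2.1 of \cite{AC}, so there is no internal proof to compare against. Judged on its own, your argument has one genuine structural gap. Both your forward direction and the step ``$a\in l\Z$'' in your reverse direction rest on the spectral decomposition of Theorem~\ref{t.acmixing}, but that theorem is only available for \emph{isolated} homoclinic classes of \emph{generic} diffeomorphisms, whereas the proposition carries no such hypotheses --- and the paper needs it in exactly that generality (in Proposition~\ref{r.lpp} it is applied to every $g$ in a neighbourhood of a given $f$, which is certainly not a residual set). Worse, in \cite{AC} the spectral decomposition is \emph{deduced from} Proposition~2.1, so your derivation runs the logic backwards relative to the source. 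The forward direction has to be proved directly: using your own composition property and the two-sided $\tau(p_1)$-, $\tau(p_2)$-translation invariance one shows that $E(p,p)$ is a subgroup $l'\Z$ of $\Z$, that $E(q,q)=l'\Z$ for every $q$ homoclinically related to $p$, and that each non-empty $E(q,q')$ is a coset of $l'\Z$; the hypothesis $0\in E(p_1,p_2)$ then gives $E(p_1,p_2)=l'\Z$. What remains --- and what you sidestep entirely by invoking the decomposition --- is the identification $l'=l(O(p))$: that $l'$ divides every period is immediate from $\tau(q)\Z\subset E(q,q)$, but the converse divisibility requires producing periodic points homoclinically related to $p$ whose periods, together with $\tau(p)$, have greatest common divisor $l'$; this is done by a shadowing/horseshoe construction exactly as in Lemma~\ref{horseshoemixing}.

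A second, smaller imprecision: in the chain $p_1\to q_1\to\cdots\to q_N\to p_2$ you justify $a=\sum n_i\in l\Z$ ``by the forward direction applied to each pair'', but the forward direction as stated needs $0\in E(q_i,q_{i+1})$, which you do not have for the intermediate pairs. This is repaired either by the telescoping piece-assignment argument implicit in your own proof of the forward direction, or more cleanly by the coset structure described above. The rest of your reverse direction --- the $\lambda$-lemma composition estimate, the two-sided translation invariance, and the Bezout bookkeeping that absorbs the ``sufficiently large $k_i$'' thresholds by trading multiples of $\tau(p_1)\tau(q_i)$ between $k_i$ and the unconstrained coefficient of $\tau(p_1)$ --- is sound and is essentially the argument of \cite{AC}.
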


\begin{remark} In particular, if $\tilde{p}$ is homoclinically related to $p$, then $W^u(f^n(\tilde{p}))\pitchfork W^s(\tilde{p})\neq \emptyset$ if, and only if, $n\in l(O(p))\Z$. 
\label{rmk.int.}\end{remark}

Here, we also investigate properties of topologically mixing homoclinic classes which may not be the
whole manifold. In this sense we remark the following:

\begin{remark}
Also as a direct consequence of the Theorem \ref{t.acmixing} we have that generically, if an isolated
homoclinic class $H(p)$ is topologically mixing then $W^u(f(p))$ has a non empty transversal intersection
with $W^s(p)$.  Now, since this intersection is robust we point out that Theorem \ref{baranasquestion} is a consequence of this and Proposition 2.3 in \cite{AC}.   
\label{mixing1}\end{remark}

The result below, of Bonatti and Crovisier \cite{BC}, proves that a large class
of transitive diffeomorphism have the property that the whole manifold coincides 
with a homoclinic class.

\begin{theorem}[Bonatti and Crovisier]
\label{BC}
There exists a residual subset $\mathcal{R}$ of $\diff^1(M)$ such that for every 
transitive diffeomorphism $f\in\SR$ 
 if $p$ is a hyperbolic periodic point of $f$  then $M=H(p,f)$.
\end{theorem}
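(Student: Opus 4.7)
The plan is to combine three generic ingredients: Pugh's general density theorem, the fact that transitivity implies a single chain-recurrence class, and a generic equivalence between the homoclinic relation and the chain-recurrent relation on hyperbolic periodic points.

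First I would invoke Pugh's closing lemma in its residual form: there is a residual set $\mathcal{R}_1\subset\diff^1(M)$ such that for every $f\in\mathcal{R}_1$ the periodic points are dense in the non-wandering set $\Omega(f)$. Since a transitive diffeomorphism satisfies $\Omega(f)=M$, for $f\in\mathcal{R}_1$ transitive one already has $\overline{\per(f)}=M$. Moreover, generically all periodic points are hyperbolic (a straightforward Kupka--Smale type statement), so I will further intersect with that residual set.

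The second step is to produce a residual subset $\mathcal{R}_2\subset\diff^1(M)$ such that, for any $f\in\mathcal{R}_2$, any two hyperbolic periodic points $p,q$ lying in a common chain-recurrence class are homoclinically related, hence $H(p,f)=H(q,f)$. To this end I would upgrade Hayashi's connecting lemma (Theorem \ref{connecting lema}) to a connecting lemma for pseudo-orbits: given an $\varepsilon$-pseudo-orbit from a point of $W^u(O(p))$ to a point of $W^s(O(q))$, one performs a \emph{finite but possibly large} sequence of $C^1$-small perturbations of $f$, supported on pairwise disjoint small boxes along the pseudo-orbit, whose cumulative effect is to straighten the pseudo-orbit into a true orbit, producing a transverse intersection $W^u(O(p))\pitchfork W^s(O(q))\ne\emptyset$. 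Reversing the roles of $p$ and $q$ yields an intersection in the other direction, so $p$ and $q$ become homoclinically related. A standard genericity argument (taking a countable basis of the topology and intersecting the corresponding open-dense sets) then produces $\mathcal{R}_2$.

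Now set $\mathcal{R}=\mathcal{R}_1\cap\mathcal{R}_2\cap\{\text{Kupka--Smale}\}$. Let $f\in\mathcal{R}$ be transitive and $p$ a hyperbolic periodic point. Transitivity implies that $M$ itself is a single chain-recurrence class of $f$ (any two points are joined by $\varepsilon$-pseudo-orbits in both directions), so every hyperbolic periodic point of $f$ lies in the same chain-recurrence class as $p$. By the property defining $\mathcal{R}_2$, every such point is homoclinically related to $p$, hence belongs to $H(p,f)$. Combining this with the density of $\per(f)$ in $M$ from the first step, and the fact that $H(p,f)$ is closed, yields $H(p,f)=M$.

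The main obstacle is unquestionably the second step, i.e.\ the connecting lemma for pseudo-orbits. Hayashi's lemma connects forward and backward iterates of genuine points in the local invariant manifolds; extending it to a pseudo-orbit of arbitrarily many jumps requires carefully choosing the perturbation supports so that later perturbations do not destroy earlier ones, and controlling the $C^1$-size of the cumulative perturbation. This is the genuinely delicate technical heart of the argument; the rest is assembly.
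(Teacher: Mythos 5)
Your overall skeleton (Pugh's general density theorem, Kupka--Smale, transitivity forcing a single chain-recurrence class, and the connecting lemma for pseudo-orbits as the technical engine) is the right machinery --- this is indeed the content of \cite{BC}, which the paper only cites. But your second step contains a genuine error. The claim that there is a residual set $\mathcal{R}_2$ on which \emph{any} two hyperbolic periodic points in a common chain-recurrence class are homoclinically related is false. If $W^s(O(p))\pitchfork W^u(O(q))\neq\emptyset$ and $W^u(O(p))\pitchfork W^s(O(q))\neq\emptyset$, the two transversality conditions force $I(p)\geq I(q)$ and $I(q)\geq I(p)$, so homoclinically related points must have \emph{equal index}. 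Yet there are $C^1$-open sets of (robustly) transitive diffeomorphisms carrying hyperbolic periodic points of several different indices, all lying in the single chain class $M$ --- Ma\~n\'e's example, recalled in Section 2 of this paper, is exactly such a situation. Since the property you require of $\mathcal{R}_2$ fails on an open set, no residual set can have it, and your final assembly (``every hyperbolic periodic point is homoclinically related to $p$, hence $\per(f)\subset H(p,f)$'') collapses.

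The repair is to aim at a weaker but sufficient generic statement, which is what Bonatti--Crovisier actually prove: for $f$ in a residual set, the homoclinic class $H(p,f)$ \emph{coincides with the chain-recurrence class of $p$}. One does not try to relate $p$ to other periodic points; instead, given any point $x$ chain-related to $p$, the connecting lemma for pseudo-orbits produces a perturbation creating a transverse intersection of $W^u(O(p))$ with $W^s(O(p))$ \emph{near $x$}, and a Baire/semicontinuity argument (lower semicontinuity of $g\mapsto H(p(g),g)$ against upper semicontinuity of the chain class) converts ``after an arbitrarily small perturbation'' into ``generically''. Combined with your (correct) observation that transitivity makes $M$ a single chain class, this yields $M=H(p,f)$ directly; the density of periodic points from your first step is then not even needed. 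Your identification of the pseudo-orbit connecting lemma as the delicate heart of the matter is accurate --- the issue is only what you extract from it.
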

 
Another generic result is the following 
 \begin{theorem}[Theorem A, item (1), \cite{CMP}] 
 \label{t.cmp}
 There exists a residual subset $\mathcal{R}$ of $\diff^1(M)$ such that for every $f\in \SR$ if two homoclinic classes $H(p_1,f)$ and $H(p_2,f)$ are either equal or disjoint.
\end{theorem}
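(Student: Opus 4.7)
The strategy is a Baire category argument combined with Hayashi's connecting lemma and Franks' lemma. Fix a countable basis $\{U_n\}_{n\in\N}$ of open sets of $M$ and, for each pair $(n,m)\in\N^2$, let $\mathcal{A}_{n,m}\subset\diff^1(M)$ denote the set of diffeomorphisms $f$ such that either (i) for every pair of hyperbolic periodic points $p_1,p_2$ of $f$ with $O(p_1)\cap U_n\neq\emptyset$, $O(p_2)\cap U_m\neq\emptyset$ and $H(p_1,f)\cap H(p_2,f)\neq\emptyset$, one has $H(p_1,f)=H(p_2,f)$, or (ii) no such pair of periodic points with intersecting homoclinic classes exists. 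The dichotomy claimed in the theorem follows for any $f\in\bigcap_{n,m}\mathcal{A}_{n,m}$: if two classes $H(q_1,f)$ and $H(q_2,f)$ intersected without being equal, picking basic open sets containing points of $O(q_1)$ and $O(q_2)$ would contradict membership in the corresponding $\mathcal{A}_{n,m}$.

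The density of $\mathcal{A}_{n,m}$ is the technical core. Suppose $f\notin\mathcal{A}_{n,m}$; after replacing $p_1,p_2$ by appropriate iterates in their orbits, one has hyperbolic periodic points $p_1\in U_n$, $p_2\in U_m$ with $H(p_1,f)\cap H(p_2,f)\neq\emptyset$ and $H(p_1,f)\neq H(p_2,f)$. Pick $x$ in the intersection. Since $H(p_i)$ equals the closure of the transverse intersections of $W^s(O(p_i))$ with $W^u(O(p_i))$, one can find $y\in W^u_{loc}(p_1)$ arbitrarily close to $x$, a sequence $y_k\to y$, and positive integers $j_k$ such that $f^{j_k}(y_k)$ converges to a point of $W^s_{loc}(p_2)$ also near $x$. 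Theorem \ref{connecting lema} then yields a diffeomorphism $g_1$, $C^1$-close to $f$, with $W^u(p_1,g_1)\cap W^s(p_2,g_1)\neq\emptyset$ near $y$; an application of Theorem \ref{l.franks} realigns the tangent directions so that this intersection is transverse. Reversing the roles of $p_1$ and $p_2$ and localizing the next perturbation in a neighbourhood disjoint from the first connecting point, one obtains $g$ close to $g_1$ for which $W^u(p_2,g)\pitchfork W^s(p_1,g)\neq\emptyset$, while the first intersection persists by robustness of transversality. Thus $p_1(g)$ and $p_2(g)$ are homoclinically related, $H(p_1,g)=H(p_2,g)$, and $g\in\mathcal{A}_{n,m}$.

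For the $G_\delta$ aspect, one argues that each $\mathcal{A}_{n,m}$ contains a dense open subset: alternative (i) is open because once two continuations are homoclinically related the relation is $C^1$-robust, while alternative (ii) is open along the residual locus where the map $f\mapsto H(p,f)$ is upper semicontinuous in the Hausdorff metric (a generic continuity property used, for example, in \cite{BC,CMP}), since disjointness of compact sets is preserved under small Hausdorff perturbations. Intersecting $\bigcap_{n,m}\mathcal{A}_{n,m}$ with the residual set of upper-semicontinuity points yields the desired residual subset $\mathcal{R}$.

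The main obstacle is performing the two applications of the connecting lemma coherently: producing the second intersection $W^u(p_2)\pitchfork W^s(p_1)$ must not destroy the first intersection $W^u(p_1)\pitchfork W^s(p_2)$ created previously. This is handled by supporting each perturbation in an arbitrarily small neighbourhood of its own connecting point, chosen disjoint from the other, and invoking the fact that transverse intersections of invariant manifolds persist under $C^1$-small perturbations.
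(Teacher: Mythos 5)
The paper does not prove this statement: it is imported verbatim as Theorem A(1) of \cite{CMP} and used as a black box, so your attempt can only be measured against the proof in that reference, which does share your general architecture (Baire category, the connecting lemma, robustness of the homoclinic relation). Within that architecture your argument has genuine gaps. The most decisive one is an index obstruction: your density step aims to perturb $f$ so that $p_1(g)$ and $p_2(g)$ become homoclinically related, but the homoclinic relation requires non-empty transverse intersections $W^u(O(p_1))\pitchfork W^s(O(p_2))$ and $W^u(O(p_2))\pitchfork W^s(O(p_1))$, which by a dimension count forces $I(p_1)=I(p_2)$. Two homoclinic classes with a common point can perfectly well be classes of periodic points of different indices --- this is exactly the situation produced by Theorem \ref{t.ABCDW} and by the non-hyperbolic robustly transitive examples in this paper --- and for such a pair no perturbation whatsoever makes the points homoclinically related, so your mechanism cannot yield the dichotomy precisely in the cases where it is non-trivial. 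A second structural gap: fixing the one offending pair $(p_1,p_2)$ by a perturbation does not place $g$ in $\mathcal{A}_{n,m}$, since that set quantifies over \emph{all} pairs of periodic points whose orbits meet $U_n$ and $U_m$; nor is $\mathcal{A}_{n,m}$ shown to be residual --- upper semicontinuity of $f\mapsto H(p,f)$ holds only on a residual set, and a property valid on a residual locus cannot be used to conclude that alternative (ii) defines an open set. The standard repair is to index the dichotomy by a countable family of continuations of periodic orbits and, for each fixed pair, to use the open and dense set $\{g: p_1(g)\sim p_2(g)\}\cup\operatorname{int}\{g: p_1(g)\not\sim p_2(g)\}$, the perturbation serving only to exclude the second alternative on the residual intersection.

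Finally, the application of Theorem \ref{connecting lema} is not justified. From $x\in H(p_1,f)\cap H(p_2,f)$ you obtain sequences $z_k\in W^u(O(p_1))$ and $w_k\in W^s(O(p_2))$ both converging to $x$, but this is not the hypothesis of the lemma: one needs a single sequence $y_k$ converging to a point of $W^u_{loc}(p_1)$ whose forward iterates converge to a point of $W^s_{loc}(p_2)$, and pulling the $w_k$ back by large negative iterates destroys all distance control, so no such sequence is produced. What is actually needed is the stronger form of Hayashi's lemma that connects two orbits passing near a common non-periodic point $x$ by a perturbation supported in a tube around a finite orbit segment of $x$, together with a separate treatment of the case where the intersection of the two classes consists only of periodic orbits. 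These verifications, together with the index issue above, are exactly where the weight of the proof in \cite{CMP} lies and are absent from your sketch.
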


The next result, from \cite{ABCDW}, says that generically, homoclinic classes are index complete. 

\begin{theorem}[Theorem 1 in \cite{ABCDW}]
There is a residual subset $\mathcal{R}\in \diff^1(M)$ of diffeomorphisms $f$ such 
that, every $f\in \mathcal{R}$ and any homoclinic class containing hyperbolic periodic points of indices $i$ and $j$, also contains hyperbolic periodic points of index $k$ for every $i\leq k\leq j$.
\label{t.ABCDW}\end{theorem}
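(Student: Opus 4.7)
The plan is to proceed by induction on the gap $j-i$. The inductive step is the assertion that whenever $H(p)$ contains hyperbolic periodic points of indices $i$ and $j$ with $j \geq i+2$, some intermediate index $k$ with $i<k<j$ is also realized in $H(p)$; iterating this reduces the gap by one and eventually fills $[i,j]\cap \N$. I would concentrate on this inductive step.

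Fix $q_1,q_2\in H(p)$ hyperbolic with $I(q_1)=i$ and $I(q_2)=j$. Since $H(p)$ is transitive, pseudo-orbits (and nearby true orbit segments) can be made to transit arbitrarily close to both $q_1$ and $q_2$ and back to $q_1$. I would apply Hayashi's connecting lemma (Theorem \ref{connecting lema}) twice to produce, by an arbitrarily small $C^1$-perturbation, a diffeomorphism $g$ for which the continuations of $q_1$ and $q_2$ take part in a heterodimensional cycle. Then, using standard unfolding techniques for heterodimensional cycles, one would create periodic points shadowing the cycle whose derivative factors as a product of powers of $Dg^{\tau(q_1)}(q_1)$ and $Dg^{\tau(q_2)}(q_2)$. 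By tuning the number of iterates spent near each of the two hyperbolic points, these shadowing orbits can be arranged to have a Lyapunov exponent arbitrarily close to zero in a direction corresponding to some intermediate dimension, and Lemma \ref{change.index} (via Franks' lemma, Theorem \ref{l.franks}) would then flip the sign of this weak exponent, yielding a hyperbolic periodic point of the desired intermediate index $k$.

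The last step is to verify that this new periodic point belongs to $H(p(g),g)$ and that the whole construction can be absorbed into a residual subset of $\diff^1(M)$. The former follows because the new orbit lies in an arbitrarily small neighborhood of the cycle, which by construction sits inside the continuation of $H(p)$, together with the generic property that homoclinic classes are either equal or disjoint and depend semicontinuously on $f$ (compare Theorem \ref{t.cmp}). Enumerating a countable basis of open sets in $M$ and the countably many candidate triples $(i,k,j)$, each of the required conditions can be expressed as an open-and-dense property in $\diff^1(M)$, and Baire's theorem produces the residual subset $\SR$. The main obstacle lies precisely in the unfolding step: one must produce a shadowing orbit whose weak Lyapunov exponent sits in exactly the subbundle needed to yield index $k$ after a Franks perturbation, and one must simultaneously certify that the resulting orbit is genuinely contained in $H(p(g),g)$ rather than drifting into a nearby but distinct chain-recurrence class.
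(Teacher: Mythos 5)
The paper does not prove this statement at all: it is quoted verbatim as Theorem~1 of \cite{ABCDW} and used as a black box, so there is no internal proof to compare against. Judged on its own merits, your outline does follow the genuine strategy of Abdenur--Bonatti--Crovisier--D\'{\i}az--Wen (connecting lemma to build a heterodimensional cycle between saddles of indices $i$ and $j$ inside the class, unfolding to get orbits alternating between the two saddles, tuning sojourn times to produce a weak exponent, Franks' lemma to flip it), but the step you yourself flag as ``the main obstacle'' is exactly the mathematical content of the theorem, and it is not a routine verification. The derivative of a periodic orbit shadowing the cycle is a product of powers of $Dg^{\tau(q_1)}(q_1)$ and $Dg^{\tau(q_2)}(q_2)$ \emph{conjugated by transition matrices} coming from the two heteroclinic connections, and eigenvalues of such products are not functions of the eigenvalues of the factors: without additional structure there is no reason the $k$-th exponent of the shadowing orbit interpolates between those of $q_1$ and $q_2$. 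The published proof resolves this with the formalism of \emph{linear systems with transitions} (after Bonatti--D\'{\i}az--Pujals), which lets one realize, after a further Franks-type perturbation, essentially arbitrary words in the two derivatives and their transitions as derivatives of genuine periodic orbits homoclinically related to the original saddles; that machinery both produces the intermediate weak exponent in a controlled subbundle and certifies that the new orbit stays in the class. Your sketch assumes this realization rather than proving it.

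A second, smaller gap is the Baire argument. The hypothesis ``$H(p)$ contains periodic points of indices $i$ and $j$'' is not an open condition on $f$, so the property you want is not literally ``open and dense'' for each triple $(i,k,j)$; the genericity in \cite{ABCDW} is obtained by enumerating periodic orbits with their continuations, using the generic continuity of $f\mapsto H(p(f),f)$ and Theorem~\ref{t.cmp}-type dichotomies, and taking a countable intersection over those data. This can be repaired along the lines you indicate, but as written the sentence ``each of the required conditions can be expressed as an open-and-dense property'' is not correct.
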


The next tool we shall use is due to  Abdenur, Bonatti and Crovisier in \cite{ABC} which extends Sigmund's result \cite{S2} to the non-hyperbolic setting.

\begin{theorem}[Theorem 3.5, item (a), in \cite{ABC}] Let $\Lambda$ be an isolated non-hyperbolic transitive set of a $C^1-$generic diffeomorphism $f$, then the set of periodic measures supported in $\Lambda$ is a dense subset of the set $M_f(\Lambda)$ of invariant measures supported in $\Lambda$. 
\label{ABC} \end{theorem}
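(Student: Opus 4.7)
The plan is to combine the ergodic decomposition of $M_f(\Lambda)$ with a $C^1$-generic form of Ma\~n\'e's ergodic closing lemma, and finally a gluing step that converts convex combinations of periodic measures into single periodic measures. Since $M_f(\Lambda)$ is a Choquet simplex whose extreme points are the ergodic invariant measures, every invariant measure is a weak-$*$ limit of finite convex combinations of ergodic ones. Hence it suffices to establish two approximation statements: (i) every ergodic $\mu\in M_f(\Lambda)$ is a weak-$*$ limit of periodic measures $\mu_{q_n}$ with $O(q_n)\subset\Lambda$; and (ii) every finite convex combination $\sum_{i=1}^k c_i\mu_{p_i}$ of periodic measures supported in $\Lambda$ is itself a weak-$*$ limit of single periodic measures supported in $\Lambda$. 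A diagonal extraction then delivers density.

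For (i), I would invoke the $C^1$-generic form of Ma\~n\'e's ergodic closing lemma: on a residual $\SR_1\subset\diff^1(M)$, for every $f\in\SR_1$ and every ergodic invariant $\mu$ of $f$, $\mu$-almost every point is well-closable, producing genuine (unperturbed) periodic orbits $q_n$ of $f$ whose periodic measures converge weakly to $\mu$. The generic hypothesis is essential here, since the raw closing lemma only yields periodic orbits of some small perturbation; passing to the unperturbed $f$ requires a Baire-category argument over a countable basis of the weak-$*$ topology. Isolation of $\Lambda$ then closes the argument: since $\mu_{q_n}\to\mu$ in the weak-$*$ topology and $\mu$ is supported in $\Lambda$, eventually $O(q_n)$ lies in any prescribed isolating neighborhood $U$ of $\Lambda$, so $O(q_n)\subset\Lambda$ by maximal invariance.

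For (ii), I would use the transitivity of $\Lambda$ combined with Hayashi's connecting lemma. On a further residual $\SR_2$, one can arrange that any two hyperbolic periodic points of $f$ lying in the same isolated transitive set are homoclinically related, a standard consequence of the connecting lemma together with Theorem \ref{t.cmp} and the generic coincidence between chain-recurrence classes and homoclinic classes. Once $p_1,\dots,p_k$ are homoclinically related inside $\Lambda$, successive applications of the inclination lemma along transverse homoclinic intersections produce a hyperbolic horseshoe $\Gamma\subset\Lambda$ containing continuations of every $p_i$. Sigmund's hyperbolic density of periodic measures \cite{S2} applied inside $\Gamma$ then yields a single periodic orbit whose measure approximates $\sum c_i\mu_{p_i}$ to within any prescribed weak-$*$ tolerance.

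The main obstacle lies in step (ii). Even once the $p_i$'s have been placed in a common homoclinic class, constructing a single horseshoe $\Gamma\subset\Lambda$ that simultaneously contains all of them, lies inside the isolating neighborhood of $\Lambda$, and is large enough to shadow the prescribed time-fractions $c_i$, is technically delicate: it requires iterating the inclination lemma carefully to preserve transversality while producing a single uniformly hyperbolic invariant set carrying a Markov partition that reaches near every $p_i$. Once this is handled, the theorem follows by intersecting $\SR_1\cap\SR_2$ with the further residual set of Theorem \ref{BC}, and covering the weak-$*$ neighborhood basis by countably many approximation conditions so that a single residual set works for every invariant measure on every isolated transitive set of $f$.
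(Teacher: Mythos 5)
This theorem is imported verbatim from \cite{ABC}; the paper under review gives no proof of it, so your proposal has to be measured against the argument in \cite{ABC} itself. Your skeleton --- ergodic decomposition of $M_f(\Lambda)$, a generic form of Ma\~n\'e's ergodic closing lemma to approximate ergodic measures by periodic ones, and then a realization of convex combinations of periodic measures by single periodic orbits --- is exactly the skeleton of the proof in \cite{ABC}. Step (i) is essentially correct, with one imprecision: weak-$*$ convergence $\mu_{q_n}\to\mu$ does \emph{not} force $O(q_n)$ into an isolating neighborhood $U$ of $\Lambda$; it only forces the \emph{fraction of time} the orbit spends outside $U$ to tend to $0$, so maximal invariance cannot yet be invoked. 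You need the stronger (and true) generic version of the ergodic closing lemma in which the closing periodic orbits also converge to $\operatorname{supp}\mu$ in the Hausdorff distance; that is the version actually established and used in \cite{ABC}.

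Step (ii) as written would fail. Two hyperbolic periodic points can be homoclinically related only if they have the same index: if $I(p)<I(q)$ then $\dim W^s(O(p))+\dim W^u(O(q))<d$, so a nonempty transverse intersection $W^s(O(p))\tcap W^u(O(q))$ is impossible. Since $\Lambda$ is non-hyperbolic, generically it contains periodic points of several indices (cf.\ Theorem \ref{t.ABCDW}), and the ergodic components of a general $\mu$ will be approximated by periodic orbits of different indices; no single hyperbolic horseshoe $\Gamma$ can contain all of them, so Sigmund's theorem inside $\Gamma$ cannot produce the required convex combination. The argument in \cite{ABC} instead shows directly that the closure of the set of periodic measures supported on $\Lambda$ is convex: using chain-transitivity of $\Lambda$ and the Bonatti--Crovisier connecting lemma for pseudo-orbits \cite{BC}, one perturbs $f$ to create a periodic orbit spending prescribed proportions of time near each $O(p_i)$, and then a Baire ($G_\delta$) argument over countably many rational weights and basic weak-$*$ neighborhoods transfers the conclusion to the generic $f$ itself. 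Your horseshoe construction is the right tool when all the $p_i$ are homoclinically related --- which is precisely the situation exploited in Section 4 of this paper --- but it cannot cover the mixed-index combinations that the theorem must handle.
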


Crovisier, Sambarino and Yang in \cite{CSY} showed that for any diffeomorphism $f$ in an open and dense subset far from homoclinic tangencies, every homoclinic class of $f$  has a kind of  strong partial hyperbolicity. More precisely, the difference is that the ``partially hyperbolic splitting'' found by them could have either one or both trivial extremal bundles. In this last scenario, by our definition the diffeomorphism would not be partially hyperbolic. However, by an abuse of notation, we will continue calling  it  partially hyperbolic as in \cite{CSY}. Their result gives other important properties. Like, information of the minimal and maximal indices of periodic points inside the homoclinic class. More precisely:

\begin{theorem}[Theorem 1.1(2) in \cite{CSY}]
There is an open and dense subset $\mathcal{A}\subset \diff^1(M)-\{cl(\mathcal{HT})\}$ such that for every $f\in \mathcal{A}$, any homoclinic class $H(p)$ is a partially hyperbolic set of $f$
$$
T_{H(p)}M= E^s\oplus E^c_1\oplus  \ldots E^c_k\oplus E^u,
$$
with $dim\ E^c_i=1$, $i=1,\ldots, k$, and moreover the minimal stable dimension of the periodic points of $H(p)$ is $dim(E^s) $ or $dim (E^{s})+1$. Similarly the maximal stable dimension of the periodic orbits of $H(p)$ is $dim(E^s)+k$ or $dim(E^s)+k-1$. For every $i$, $1\leq i\leq k$ there exists periodic points in $H(p)$ whose Lyapunov exponent along $E^c_i$, is arbitrary close to $0$.  
\label{p.CSY}\end{theorem}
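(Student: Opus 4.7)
The plan is to obtain the dominated splitting first, then successively refine it by perturbation arguments, so that the hypothesis of being far from homoclinic tangencies forces all intermediate bundles to be one-dimensional. As a starting point, a classical result of Wen (in the spirit of the Bonatti--Diaz--Pujals--Viana framework) shows that any homoclinic class of a diffeomorphism $f\notin cl(\mathcal{HT}(M))$ carries a dominated splitting. Indeed, if $H(p,f)$ admitted no dominated splitting of some given index, then by the work of Gourmelon on the realization of linear perturbations along periodic orbits one could produce periodic points with arbitrarily weak hyperbolicity and, using Franks' lemma (Theorem \ref{l.franks}) together with a $C^1$-small perturbation, create a homoclinic tangency associated with a periodic orbit in $H(p,f)$, contradicting $f\notin cl(\mathcal{HT}(M))$.

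The next step is to refine this dominated splitting so that each center bundle is one-dimensional. Suppose, by contradiction, that there is an open set where $H(p,f)$ admits a dominated splitting with some central piece $E^c$ of dimension at least two that is not uniformly hyperbolic. Using Theorem \ref{ABC} to approximate any invariant measure by a periodic one, together with Lemma \ref{change.index} and Hayashi's connecting lemma (Theorem \ref{connecting lema}), one can produce periodic points in $H(p,f)$ whose restricted cocycle on $E^c$ has real eigenvalues of the same modulus, or a pair of complex conjugate eigenvalues. A well-known argument (Gourmelon's realization of Jordan blocks, or the Bonatti--Diaz perturbation creating a tangency inside a two-dimensional center bundle) then produces a homoclinic tangency by a $C^1$-small perturbation, again contradicting the hypothesis. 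So, generically inside the complement of $cl(\mathcal{HT}(M))$, every central bundle is one-dimensional. A similar argument applied at the extremal ends, together with the definition of dominated splitting and Franks' lemma, ensures that $E^s$ and $E^u$ are uniformly contracting and uniformly expanding, respectively.

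The index statement follows by combining the existence of the partially hyperbolic splitting with Theorem \ref{t.ABCDW}: on one hand, the partial hyperbolicity forces every hyperbolic periodic point in $H(p,f)$ to have index in $\{\dim E^s,\dots,\dim E^s+k\}$; on the other hand, the minimal index realized is at least $\dim E^s$ and at most $\dim E^s + 1$, for otherwise all periodic points in $H(p,f)$ would contract $E^s\oplus E^c_1$ uniformly, which (using a Pliss/Mañé-type argument) would integrate into a uniform contraction of the bundle $E^s\oplus E^c_1$ on all of $H(p,f)$, contradicting the minimality of the dominated decomposition. The symmetric argument for $f^{-1}$ gives the analogous statement for maximal indices.

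The central Lyapunov exponents close to zero is the delicate part, and I expect it to be the main obstacle. The idea is that if, for some $i$, all periodic measures in $H(p,f)$ had Lyapunov exponent along $E^c_i$ uniformly bounded away from zero (say, uniformly negative), then by Theorem \ref{ABC} every invariant measure supported on $H(p,f)$ would have negative Lyapunov exponent along $E^c_i$; by a Pliss-lemma argument this would upgrade to uniform contraction of $E^s\oplus E^c_1\oplus\cdots\oplus E^c_i$ on $H(p,f)$, contradicting either the refinement of the splitting (if the resulting bundle were not the next extremal one) or the previous step on the minimal index. The sign symmetry yields the same near zero from above. The hard point is to make this argument simultaneously for every $i$ while preserving the index structure, which is precisely where the careful selection of perturbations and the generic residual $\mathcal{A}$ in the statement are needed.
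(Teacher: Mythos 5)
This statement is not proved in the paper at all: it is Theorem~1.1(2) of Crovisier--Sambarino--Yang, imported verbatim as a black box (the paper's ``proof'' is the citation to \cite{CSY}). So the relevant question is whether your sketch could stand in for that external proof, and it cannot: it is a reasonable roadmap of the ideas in \cite{CSY}, but each of its steps compresses a genuinely hard technical argument into a sentence, and you yourself concede in the last paragraph that the final step is not done.

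Concretely, the gaps are these. First, the implication ``no dominated splitting with one-dimensional central subbundles $\Rightarrow$ a $C^1$-small perturbation creates a homoclinic tangency'' is precisely the main technical content of \cite{CSY} (building on Wen and Gourmelon); invoking it is assuming what is to be proved, and in particular Franks' lemma as stated in Theorem~\ref{l.franks} only lets you modify derivatives along a finite orbit segment --- turning weak or complex central eigenvalues into an actual non-transverse intersection of invariant manifolds, while keeping the orbit inside the class, requires Gourmelon's refinement with control of the invariant manifolds, which you cite only by name. Second, the passage from ``all periodic (or invariant) measures have Lyapunov exponent along $E^c_i$ bounded away from zero'' to ``uniform contraction of $E^s\oplus E^c_1\oplus\cdots\oplus E^c_i$'' is not a routine Pliss argument: it needs Ma\~n\'e's ergodic closing lemma and a genericity selection, and your appeal to Theorem~\ref{ABC} is misapplied since that statement concerns \emph{isolated} transitive sets of a generic diffeomorphism, whereas $H(p)$ here is arbitrary. (Relatedly, uniform contraction of $E^s\oplus E^c_1$ would not ``contradict the minimality of the dominated decomposition''; it would contradict the defining property that $E^c_1$ is a non-hyperbolic central bundle, which is a different bookkeeping.) Third, the conclusion must hold on an \emph{open and dense} set, with the splitting, the index bounds, and the near-zero central exponents persisting for the continuation of the class; your argument produces at best residual sets at several stages. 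Finally, the existence, for \emph{every} $i$, of periodic orbits with $E^c_i$-exponent arbitrarily close to $0$ --- the part actually used later in the paper via Lemma~\ref{change.index} --- is exactly the point you flag as ``the main obstacle'' and leave open. As it stands the proposal is an annotated table of contents for \cite{CSY}, not a proof.
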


\section{Large Periods Property}\label{lpp}

In this section we introduce the large periods property,  our main tool to detect mixing properties.

\begin{definition}
Let $f:X\to X$ be a homeomorphism of a metric space. We say that $f$ has the large periods property if for any $\eps>0$ there exists $N_0\in\N$ such that for every $n\geq N_0$ there exists $p_n\in\fix(f^n)$, whose orbit under $f$ is $\eps$ dense in $X$.
\end{definition}

A simple remark is that if $X$ has an isolated point and $f$ has the large period property then $X$ is a singleton.

The large periods property can be used as a criterion to assure mixing, as the next result shows.

\begin{lemma}
\label{criteriopramixing}
Every homeomorphism of a metric space with the large periods property is topologically mixing
\end{lemma}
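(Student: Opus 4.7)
Plan: Given nonempty open sets $U,V\subset X$, I aim to find $N\in\N$ such that $f^m(U)\cap V\neq\emptyset$ for every $m\geq N$. Choose $u\in U$, $v\in V$, and $\epsilon>0$ so that $B(u,\epsilon)\subset U$ and $B(v,\epsilon)\subset V$, and apply the large periods property with this $\epsilon$ to obtain $N_0\in\N$ such that for every $n\geq N_0$ there is $p_n\in\fix(f^n)$ whose orbit is $\epsilon$-dense in $X$.

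For each $n\geq N_0$, the $\epsilon$-density of the orbit of $p_n$ supplies integers $a_n,b_n\in\{0,\ldots,n-1\}$ with $f^{a_n}(p_n)\in B(u,\epsilon)\subset U$ and $f^{b_n}(p_n)\in B(v,\epsilon)\subset V$. Replacing $p_n$ by $f^{a_n}(p_n)$, which still lies in $\fix(f^n)$ and has the same $\epsilon$-dense orbit, I may assume $p_n\in U$. Setting $c_n=(b_n-a_n)\bmod n$, the relation $f^n(p_n)=p_n$ propagates $f^{c_n}(p_n)\in V$ to $f^{c_n+kn}(p_n)=f^{c_n}(p_n)\in V$ for every $k\geq 0$, so $p_n\in U$ witnesses $f^{c_n+kn}(U)\cap V\neq\emptyset$. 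Hence the set of \emph{good times} $G=\{j\geq 1:f^j(U)\cap V\neq\emptyset\}$ contains, for every $n\geq N_0$, the arithmetic progression $\{c_n+kn:k\geq 0\}$.

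The subtle step is to conclude that $G$ contains every sufficiently large integer. Taking $n=m$ places only the residues $c_m+km$ in $G$, and the specific integer $m$ is captured only when $c_m=0$, i.e.\ when the orbit of $p_m$ meets $U\cap V$. This fails in general when $U$ and $V$ are disjoint, since $f^m$ restricted to the orbit of $p_m$ is the identity. To repair this I would invoke $p_n$ for values of $n$ different from $m$: choosing $n$ coprime to $m$ (for instance $n=m+1$) forces $f^m$ to act as a non-trivial cyclic shift on the orbit of $p_n$, and the resulting arithmetic progressions, taken together with those for $n=m,m+1,\ldots$, should combine to cover all large integers.

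I expect the main obstacle to be this final combinatorial merging, since the displacements $c_n$ are not directly controllable by the large periods property. Overcoming it will likely require shrinking $\epsilon$ (forcing the orbit of each $p_n$ to visit $U$ and $V$ many times per period, enlarging the set of admissible displacements) together with a Chinese remainder type argument exploiting $\gcd(n,n+1)=1$ for consecutive $n$, so that every residue class modulo some product of periods is realized by at least one arithmetic progression $\{c_n+kn:k\geq 0\}$.
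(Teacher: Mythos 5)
Your proposal stops exactly where the real work begins, and the gap you flag is genuine: with no control over the displacements $c_n$, the union of the arithmetic progressions $\{c_n+kn:k\geq 0\}$ need not be cofinite in $\N$, and neither shrinking $\eps$ nor a Chinese-remainder argument obviously rescues this, because the large periods property gives you no handle whatsoever on which residue class $c_n$ lands in. So as written the argument does not close.

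The paper's proof avoids the combinatorics entirely by \emph{fixing the transition time in advance} rather than reading it off from the periodic orbit afterwards. First one checks (as you essentially do) that the large periods property implies transitivity. Then, given disjoint nonempty open $U,V$, transitivity yields a single integer $n_1$ and an open ball $B\subset U$ with $f^{n_1}(B)\subset V$. Now take $\eps=\operatorname{diam}(B)/2$, so that \emph{every} $\eps$-dense orbit must meet $B$, and let $N_0=N_0(\eps)$. For $n\geq N_0$ choose $p\in\fix(f^{n+n_1})$ with $\eps$-dense orbit; after replacing $p$ by an iterate you may assume $p\in B$. Then $f^{n_1}(p)\in V$ and $f^n\bigl(f^{n_1}(p)\bigr)=f^{n+n_1}(p)=p\in U$, so $f^n(V)\cap U\neq\emptyset$ for all $n\geq N_0$; since the pair $(U,V)$ was arbitrary this is topological mixing. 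In your notation, the trick is that anchoring the periodic point inside $B$ forces the $U$-to-$V$ displacement to equal the fixed number $n_1$, and then the period is chosen to be $n+n_1$ so that the remaining $n$ steps return the orbit to $U$ --- the free parameter is the \emph{period}, not the offset. You had all the ingredients (the $\eps$-density forcing a visit to a prescribed ball, and the freedom to choose the period); the missing idea is to spend the $\eps$-density on hitting a ball $B$ whose image $f^{n_1}(B)$ is already known to lie in $V$.
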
 
\begin{proof}
Let $f:X\to X$ be a homeomorphism with the large periods property. Notice that $f$ is transitive. Indeed, given $U$ and $V$ non-empty and disjoint open sets take $\eps<\min\{diam(U),diam(V)\}$. By the large periods property, there exists a point $p\in\per(f)$ whose orbit is $\eps$ dense in $X$. This implies that there exists a point $y\in V$ and $n>0$ such that $f^n(y)\in U$. Thus $f$ is transitive.

We now prove that $f$ is topologically mixing. Let $U$ and $V$ be non-void and disjoint open sets. By the transitivity of $f$ there exists a first iterate $n_1$ such that $f^{n_1}(U)\cap V\neq\emptyset$. In particular, $f^j(U)\cap V=\emptyset$ for every $j=1,...,n_1-1$. Take an open ball $B\subset U$, satisfying $$f^{n_1}(B)\subset f^{n_1}(U)\cap V,$$ and $\eps=diam(B)/2$. Let $N_0=N_0(\eps)$ be given by the large periods property. 

We claim that $f^n(V)\cap U\neq\emptyset$, for every $n\geq N_0$. Indeed, we know that there exists $p\in\fix(f^{\tau})$, with $\tau=n+n_1$, whose orbit under $f$ is $\eps$ dense in $X$. By the choice of $\eps$, there is an iterate of $p$ in $B$. Since $p$ is periodic we shall assume for simplicity that $p$ itself is in $B$. This implies that $f^{n_1}(p)\in V$, and therefore
$$f^n(f^{n_1}(p))=f^{n+n_1}(p)=f^{\tau}(p)=p\in U.$$
This proves our claim, and establishes the lemma.
\end{proof} 

It is a natural question if the converse of this result is true. However, Carvalho and Kwietniak \cite{CK}
gave an example of a homeomorphism of a compact metric space with the two-sided limit shadowing property, but
without  periodic points. Theorem B in \cite{CK} establishes that the
two-sided limit shadowing
property implies topological mixing. Therefore, the converse of Lemma \ref{criteriopramixing}
is not true in general.

We now turn our attention to the
differentiable setting and the semi-local dynamics of homoclinic classes.

\begin{definition}
Let $f:M\to M$ be a diffeomorphism and let $H(p)$ be a homoclinic class of $f$. We say
that an invariant subset $\Lambda\subset H(p)$ has the \emph{homoclinic large periods property} if for any $\eps>0$
there exists $N_0\in\N$ such that for every $n\geq N_0$ it is possible to find a point $p_n\in\fix(f^n)$ in $\Lambda$,
and homoclinically related with $p$, 
whose orbit under $f$ is $\eps$ dense in $\Lambda$.
\end{definition}


In the sequel, we shall establish a
result which produces hyperbolic horseshoes having the homoclinic large periods property when 
there exists a special type of homoclinic intersection.

For its proof we shall need the classical shadowing lemma.

\begin{definition}
Let $f:X\to X$ be a homeomorphism of a metric space $X$. Given $\delta>0$ we say that a sequence $\{x_n\}$ is a $\eps$-pseudo orbit if $d(f(x_n),x_{n+1})<\eps$, for every $n$. We say that the pseudo orbit is $\eps$ shadowed by a point $x\in X$, for $\eps>0$, if $d(f^n(x),x_n)<\eps$, for every $n$. The pseudo orbit is said to be periodic if there exists a minimum number $\tau$ such that $x_{n+\tau}=x_n$, for every $n$. The number $\tau$ is called the period of the pseudo orbit.  
\end{definition}

\begin{theorem}[Shadowing Lemma \cite{Rob}]
\label{t.shadowinglemma}
Let $\Lambda$ be a locally maximal hyperbolic set. For every $\eps>0$ there exists a $\delta>0$ such that every periodic $\delta$-pseudo orbit can be $\eps$-shadowed by a periodic orbit. Moreover, if $\tau$ is the period of the pseudo orbit, then the periodic point is a fixed point for $f^{\tau}$.
\end{theorem}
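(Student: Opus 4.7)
The plan is to establish the Shadowing Lemma by exploiting the local product structure of the locally maximal hyperbolic set $\Lambda$ together with a contraction mapping argument. First I would use local maximality to fix a compact neighborhood $U$ of $\Lambda$ with $\bigcap_{n\in\Z}f^n(U)=\Lambda$, small enough that $Df$ admits invariant cone fields with uniform hyperbolic constants on $U$. Then for each $x$ in a small neighborhood of $\Lambda$ one has local stable and unstable plaques $W^s_{\eta}(x)$ and $W^u_{\eta}(x)$ of uniform size depending continuously on $x$, and for any two nearby points $x,y$ the bracket $[x,y]:=W^s_{\eta}(x)\cap W^u_{\eta}(y)$ is a single point depending Lipschitz-continuously on $(x,y)$.

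Given $\eps>0$, I would pick $\delta>0$ small enough so that any $\delta$-pseudo orbit $\{x_n\}$ stays close enough to $\Lambda$ for brackets of the form $[f(x_{n-1}),x_n]$ to be defined and within $\eps/2$ of both arguments. The shadowing orbit is then obtained as the fixed point of an operator $\Phi$ on the Banach space of sequences $\{y_n\}$ with $\sup_n d(y_n,x_n)\le\eps$: informally, $\Phi$ replaces $y_n$ with the point whose forward history is aligned with $W^s$ of the pushed $x_n$'s and whose backward history is aligned with $W^u$ of the preimages, via repeated application of the bracket. Writing $f$ in adapted charts along the pseudo-orbit as a hyperbolic linear map plus a Lipschitz perturbation of small norm (this is where $\delta$ must be recalibrated), the hyperbolicity yields a contraction rate $\lambda<1$ for $\Phi$ in a suitably weighted sup-norm, so the Banach fixed point theorem produces a unique $\{y_n\}$ with $y_{n+1}=f(y_n)$ and $d(y_n,x_n)<\eps$ for all $n$. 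Local maximality then forces $y_0\in\Lambda$, since its full orbit remains in $U$.

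For the periodic refinement, suppose $x_{n+\tau}=x_n$ for all $n$. Then the shift $\{y_{n+\tau}\}$ also satisfies $\Phi(\{y_{n+\tau}\})=\{y_{n+\tau}\}$ by the equivariance of $\Phi$ under the $\tau$-shift of the pseudo-orbit data, so uniqueness of the fixed point gives $y_{n+\tau}=y_n$ and hence $f^{\tau}(y_0)=y_0$. Equivalently, one may run the whole argument on the closed subspace of $\tau$-periodic sequences, where the contraction is inherited directly and the fixed point is periodic by construction.

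The main obstacle I expect is the uniform calibration of constants: one must choose $\delta$ small enough that (i) all brackets $[f(x_{n-1}),x_n]$ are defined, (ii) the nonlinear remainder of $f$ in adapted coordinates is small enough to be dominated by the hyperbolic gap, and (iii) the resulting fixed point lies in the $\eps$-ball around the pseudo-orbit. In the standard treatment this is smoothed by passing to adapted (Lyapunov) metrics so the contraction constant is explicit from the outset; otherwise one has to iterate a uniform number of times to extract an effective contraction, which is routine but requires care so that the estimates close up uniformly in the length (or period) of the pseudo-orbit.
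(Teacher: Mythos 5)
The paper does not prove this statement; it quotes it directly from Robinson's book \cite{Rob}. Your sketch is the standard (and correct) argument for that result: local product structure plus a contraction-mapping fixed point on the sequence space, with the periodic refinement following from shift-equivariance and uniqueness of the fixed point (or, equivalently, from restricting to the subspace of $\tau$-periodic sequences), which yields precisely that the shadowing point is fixed by $f^{\tau}$ as claimed.
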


\begin{lemma}
\label{horseshoemixing}
Let $f$ be a diffeomorphisms with a hyperbolic periodic point $p$ such that there exists a point of transverse intersection $q\in W^s(p)\tcap W^u(f(p))$. Then, for any small enough neighborhood $U$ of $O(p)\cup O(q)$, the restriction of $f$ to the maximal invariant set $\Lambda_U=\cap_{n\in\Z}f^n(U)$ has the homoclinic large periods property.
\end{lemma}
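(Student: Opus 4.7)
I would prove the lemma via the shadowing lemma, combining two types of periodic pseudo-orbits whose available lengths jointly cover every large integer.

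Since $q\in W^s(p)\pitchfork W^u(f(p))$ is a transverse homoclinic point for the periodic orbit $O(p)$, the Smale--Birkhoff homoclinic theorem makes $\overline{O(p)\cup O(q)}$ a hyperbolic invariant set sitting inside a locally maximal hyperbolic horseshoe. For $U$ a sufficiently small neighbourhood of $O(p)\cup O(q)$, $\Lambda_U$ is itself locally maximal and hyperbolic, so the shadowing lemma (Theorem \ref{t.shadowinglemma}) applies: given $\eps>0$ there is $\delta>0$ such that every periodic $\delta$-pseudo orbit in $\Lambda_U$ is $\eps$-shadowed by a genuine periodic orbit of the same period.

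Write $\tau=\tau(p)$ and choose $K\in\N$ large enough that $d(f^{-k}(q),f(p))<\delta$ and $d(f^{k}(q),p)<\delta$ for every $k\geq K$; this uses precisely $q\in W^u(f(p))\cap W^s(p)$. Two kinds of periodic $\delta$-pseudo-orbit blocks based at $p$ are then available. A \emph{stay} of length $\tau$: the genuine segment $p,f(p),\dots,f^{\tau-1}(p)$, involving no jump. A \emph{loop of length $L$}, for any prescribed $L\geq 2K+1$: the segment $p,\,f^{-K}(q),\,f^{-K+1}(q),\,\dots,\,f^{L-K-2}(q)$, carrying $\delta$-jumps only at the start $p\to f^{-K}(q)\approx f(p)$ and at closure $f^{L-K-1}(q)\approx p$. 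Concatenating $A$ stays and $B$ loops of prescribed lengths yields a periodic $\delta$-pseudo orbit of the corresponding total period.

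The decisive use of the hypothesis $q\in W^u(f(p))$ (rather than $q\in W^u(p)$) is that loop lengths may be any integer $\geq 2K+1$, in particular are not trapped in the lattice $\tau\Z$. Choosing loops of coprime lengths $L_1=2K+1$ and $L_2=2K+2$, the numerical semigroup generated by $\tau,L_1,L_2$ contains every integer past some Frobenius threshold $N_0$, so for each $n\geq N_0$ one has a representation $n=A\tau+B_1L_1+B_2L_2$ with $A\geq 1$ and $B_1+B_2\geq 1$. The resulting pseudo-orbit passes within $\delta$ of every point of $O(p)\cup O(q)$, and by arranging the order of its $A+B_1+B_2$ blocks so that every short sub-word occurs as a substring (a de Bruijn-style pattern, affordable since $A$ can be taken arbitrarily large) it becomes $\delta$-dense in the whole horseshoe $\Lambda_U$ via its Markov partition. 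Shadowing then produces $p_n\in\fix(f^n)\cap\Lambda_U$ whose orbit is $2\eps$-dense in $\Lambda_U$; being a hyperbolic periodic orbit in the transitive horseshoe $\Lambda_U$, it is automatically homoclinically related to $p$ by the $\lambda$-lemma.

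The main obstacle is precisely the combinatorial realisation of \emph{every} large $n$ as a period: it is the ``one-step shift'' in the hypothesis, $q\in W^u(f(p))$ instead of $W^u(p)$, that decouples loop lengths from the lattice $\tau\Z$ and makes the coprime pair $(2K+1,2K+2)$ available, thereby driving the class's period down to $1$ and forcing mixing-type behaviour at the level of periodic orbits.
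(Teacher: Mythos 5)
Your overall architecture --- the shadowing lemma applied to periodic $\delta$-pseudo-orbits built by concatenating ``stays'' near $O(p)$ with homoclinic ``loops'' through $O(q)$ --- is the same as the paper's, but the step that is supposed to realise \emph{every} large period is broken whenever $\tau:=\tau(p)>1$. You assert that $d(f^{-k}(q),f(p))<\delta$ and $d(f^{k}(q),p)<\delta$ for \emph{every} $k\geq K$. This is false: $q\in W^u(f(p))$ only gives $f^{-j\tau}(q)\to f(p)$ along the arithmetic progression of multiples of $\tau$, and for a general exponent $k$ the point $f^{-k}(q)$ accumulates on $f^{\,1-k}(p)$ (index taken mod $\tau$), while $f^{k}(q)$ accumulates on $f^{k}(p)$, not on $p$. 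Consequently a loop that opens with the jump $p\to f^{-K}(q)\approx f(p)$ and closes with $f^{L-K-1}(q)\approx p$ forces $K\equiv 0$ and $L\equiv 1\pmod{\tau}$: admissible loop lengths are trapped in the coset $1+\tau\Z$, and the coprime pair $(2K+1,2K+2)$ on which your Frobenius/numerical-semigroup count rests simply does not exist unless $\tau=1$. Since stays contribute $0$ and each loop contributes $1$ modulo $\tau$, a pseudo-orbit with $B$ loops necessarily has period $\equiv B\pmod{\tau}$, and your two-generator argument cannot produce the residues it needs.

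The conclusion is still reachable, and the paper's proof shows how: the real content of the hypothesis $q\in W^u(f(p))$ (as opposed to $W^u(p)$) is that each excursion through $O(q)$ advances the phase along $O(p)$ by one less than its length, so one realises the residue $r$ of $n$ modulo $\tau$ by concatenating exactly $r$ excursions and then pads with multiples of $\tau$ by lingering near $O(p)$. If you replace ``two coprime loop lengths'' by ``number of loops $\equiv n\pmod{\tau}$'' (with the bookkeeping needed to make the lengths come out to exactly $n$), the rest of your argument --- shadowing, density of the shadowing orbit via Hausdorff closeness to $O(p)\cup O(q)$, and the homoclinic relation to $p$ via the hyperbolicity of $\Lambda_U$ --- goes through. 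The de Bruijn arrangement is unnecessary: for $U$ small, $\Lambda_U$ itself lies in a small neighbourhood of $O(p)\cup O(q)$, so any periodic orbit Hausdorff-close to $O(p)\cup O(q)$ is automatically $\eps$-dense in $\Lambda_U$.
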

\begin{proof}
For this proof, we denote $\tau:=\tau(p)$ the period of $p$.

It is a well known result (see for instance, Theorem 4.5, pg. 260 in \cite{Rob}) that for any small enough neighborhood $U$ of $O(p)\cup O(q)$ the maximal invariant set $\Lambda_U=\cap_{n\in\Z}f^n(U)$ is a hyperbolic set. 

Take an arbitrary $\eps>0$ and $\delta>0$ given by Theorem \ref{t.shadowinglemma}. 
We claim that there exists a number $N_0$ such that for every $n\geq N_0$ it 
is possible to construct a periodic $\delta$-pseudo orbit inside $U$, with period 
exactly equal to $n$, and whose Hausdorff distance to $O(p)\cup O(q)$ is smaller 
than $\eps$.

Once we have settled this, the shadowing lemma will produce periodic orbits, which
are fixed points for $f^n$, and whose Hausdorff distance to $O(p)\cup O(q)$ is $2\eps$.
In particular, these orbits must be $3\eps$ dense
in $\Lambda_U$, with respect to the Hausdorff distance.
Moreover, if $\eps$ is small enough, all of these periodic orbits will be homoclinically related 
by the hyperbolicity of $\Lambda_U$.

Thus, we are left to show our claim.
With such goal in mind, we take a large iterate $x=f^{N\tau}(q)$ such that $$f^{-r\tau}(x)\in B(p,\delta/2),$$ for every $r=0,...,\tau-1$. Observe that $f^{-1}(x)\in W^u(p)$, since $q\in W^u(f(p))$. This implies that there exists a smallest positive integer $l\in\N$ such that $$f^{-l\tau-1}(x)\in B(p,\delta/2).$$

Now, we can give the number $N_0$. For each $r=1,...,\tau-1$, let $k_r=rl$ and take $L=\prod_{r=1}^{\tau-1}k_r$. We define $N_0:=L\tau$. Observe that if $n\geq N_0$ we can write $$n=(a+L)\tau+r=(a+L-k_r)\tau+k_r\tau+r,$$ for some $r\in\{1,...,\tau-1\}$ and $a\in\N$.

To complete the proof, we shall give the pseudo orbit. It will be given by several strings of orbit, with jumps at specific points. For this reason, and for the sake of clarity, we divide the construction in several steps between each jump. 
\begin{itemize}
\item \emph{The first string:} Define $x_0=f^{-(l+r)\tau-1}(x)$, $x_{j}=f^j(x_0)$, for every $j=1,...,l\tau$.
\item \emph{The second string:} Notice that $f(x_{l\tau})=f^{-r\tau}(x)\in B(p,\delta/2)$. Put $x_{l\tau+1}=f^{-(l+r-1)\tau-1}(x)\in B(p,\delta/2)$, and $x_{l\tau+1+j}=f^j(x_{l\tau+1})$, for every $j=1,...,l\tau$.
\item \emph{The procedure continues inductively:} Notice again that $f(x_{2l\tau+1})=f^{-(r-1)\tau}(x)$ $\in B(p,\delta/2)$, and put $x_{2l\tau+2}=f^{-(l+r-2)\tau-1}(x)$. We proceed with the construction in an analogous way, defining $x_{jl\tau+j}:=f^{-(l+r-j)\tau-1}(x)$ and the next $l\tau$ terms of the sequence as simply the iterates of this point, for every $j<r$. In this manner we construct a sequence with $rl\tau+r-1$ terms.
\item \emph{The last string:} Observe that $f(x_{rl\tau+r-1})=f^{-\tau}(x)\in B(p,\delta/2)$. Hence, we can choose $x_{rl\tau+r}=x$ and the next $(a+L-k_r)\tau-1$ terms of the sequence as simply the iterates of this point, all of which belongs to $B(p,\delta/2)$.
\item \emph{The last jump:} Finally, we close the pseudo orbit by putting $x_{(a+L-k_r)\tau+k_r\tau+r}=x_0$.
\end{itemize}
This gives a periodic $\delta$-pseudo orbit with period $n$, as required.    
\end{proof} 

As an application, from Lemmas \ref{criteriopramixing} and \ref{horseshoemixing} we obtain the following result. 

\begin{proposition}
\label{mixinghorseshoe}
 Let $f$ be a diffeomorphisms with a hyperbolic periodic point $p$ having a non empty transversal intersection between its stable manifold  and the unstable manifold of $f(p)$, i.e. there exists $q\in W^s(p,f)\tcap W^u(f(p),f)$. Then, for any small enough neighborhood $U$ of $O(p)\cup O(q)$, the maximal invariant set $\Lambda_U$ in $U$ is topologically mixing hyperbolic set. 
\end{proposition}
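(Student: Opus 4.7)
The plan is to prove Proposition \ref{mixinghorseshoe} as an immediate corollary of Lemmas \ref{horseshoemixing} and \ref{criteriopramixing}, after first justifying the hyperbolicity of $\Lambda_U$. The hyperbolicity statement is classical: because $O(p) \cup O(q)$ is a hyperbolic set (it is a transverse homoclinic orbit together with the periodic orbit $O(p)$), for any sufficiently small neighborhood $U$ of $O(p) \cup O(q)$ the maximal invariant set $\Lambda_U = \bigcap_{n \in \Z} f^n(U)$ is hyperbolic. This is precisely the fact invoked at the beginning of the proof of Lemma \ref{horseshoemixing} and can be quoted from Theorem 4.5, page 260 of \cite{Rob}.

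Given this, I would shrink $U$ if necessary so that Lemma \ref{horseshoemixing} applies. That lemma then says that the restriction $f|_{\Lambda_U}$ has the homoclinic large periods property. In particular, forgetting the homoclinic-relatedness requirement, $f|_{\Lambda_U}$ has the large periods property in the sense of Section \ref{lpp}, viewed as a homeomorphism of the compact metric space $\Lambda_U$.

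Finally, I would apply Lemma \ref{criteriopramixing} to $f|_{\Lambda_U}$ to conclude that $f$ restricted to $\Lambda_U$ is topologically mixing, which gives exactly the conclusion of the proposition. I do not foresee any real obstacle: the statement is essentially a summary of the previous two lemmas, together with the classical persistence of hyperbolicity in a neighborhood of a transverse homoclinic orbit. The only minor point to be careful about is making sure that the neighborhood $U$ is taken small enough so that \emph{both} the hyperbolicity of $\Lambda_U$ and the conclusion of Lemma \ref{horseshoemixing} hold simultaneously; this is harmless since both conclusions hold for all sufficiently small $U$.
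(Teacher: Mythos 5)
Your proposal is correct and is exactly the paper's argument: the authors state the proposition as a direct application of Lemmas \ref{criteriopramixing} and \ref{horseshoemixing}, with the hyperbolicity of $\Lambda_U$ quoted from Theorem 4.5, p.~260 of \cite{Rob} inside the proof of Lemma \ref{horseshoemixing}. Your observation that the homoclinic large periods property implies the plain large periods property, so that Lemma \ref{criteriopramixing} applies, is the only (trivial) bridge needed, and it is sound.
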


 As a by product of these arguments, we prove that if a homoclinic class 
have the homoclinic large periods property then this holds robustly.  

\begin{proposition}
\label{r.lpp}
Let $f$ be a diffeomorphisms with a hyperbolic periodic point $p$ such that the homoclinic class of $p$, $H(p)$, has the homoclinic large periods property. Then, $H(p(g))$ has the homoclinic large periods property for any diffeomorphism $g$ close enough to $f$. 
\end{proposition}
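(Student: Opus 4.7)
My plan is to extract from $H(p)$ a locally maximal hyperbolic set that is $\eps$-dense in $H(p)$ and enjoys the homoclinic large periods property, and then transfer this structure to $g$ via the classical persistence of locally maximal hyperbolic sets under $C^1$-small perturbations.

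First I would observe that the homoclinic large periods property of $H(p)$ implies, by Lemma \ref{criteriopramixing}, that $f|_{H(p)}$ is topologically mixing, and by Proposition \ref{prop.bla} this forces the period to satisfy $l(O(p)) = 1$. In particular there exists $r \in W^s(p) \pitchfork W^u(f(p))$, placing us in the setting of Lemma \ref{horseshoemixing}. Now fix $\eps > 0$. Using the large periods property I would select finitely many hyperbolic periodic points $q_1, \ldots, q_k$ homoclinically related to $p$ whose orbits together are $\eps/4$-dense in $H(p)$. Since each $q_j$ is homoclinically related to $p$ there are transversal intersections among the invariant manifolds of $p, q_1, \ldots, q_k$, and I would run the shadowing construction of Lemma \ref{horseshoemixing} on the enlarged collection of periodic orbits and heteroclinic connections, allowing the pseudo-orbits to freely combine the $r$-jump with loops around each $q_j$. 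This yields a locally maximal hyperbolic set $\Lambda_\eps \subset H(p)$ which contains $O(p) \cup O(r) \cup O(q_1) \cup \cdots \cup O(q_k)$, inherits the homoclinic large periods property, and is $\eps/4$-dense in $H(p)$.

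By the standard persistence of locally maximal hyperbolic sets, for every $g$ in a small $C^1$-neighborhood of $f$ the set $\Lambda_\eps$ admits a continuation $\Lambda_\eps(g)$ that is Hausdorff-close to $\Lambda_\eps$, topologically conjugate to it, and (because the transverse homoclinic intersections persist) contained in $H(p(g))$. Thus $\Lambda_\eps(g)$ has the homoclinic large periods property with respect to $g$: for every sufficiently large $n$ there is a fixed point of $g^n$ in $\Lambda_\eps(g)$ homoclinically related to $p(g)$ whose orbit is $\eps/2$-dense in $\Lambda_\eps(g)$, and therefore $3\eps/4$-dense in $H(p,f)$.

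The main obstacle I expect is upgrading ``$\eps$-dense in $H(p, f)$'' to ``$\eps$-dense in $H(p(g), g)$'', since homoclinic classes are a priori only lower semicontinuous and $H(p(g), g)$ could acquire new pieces under perturbation. I would address this by showing that any hyperbolic periodic orbit of $g$ belonging to $H(p(g))$ but sitting far from $H(p, f)$ would, via Hayashi's connecting lemma (Theorem \ref{connecting lema}) combined with the richness of transverse intersections already guaranteed inside $\Lambda_\eps$, correspond to a transverse homoclinic intersection with $O(p)$ already present for $f$; this would force the orbit to lie in $H(p, f)$ from the start, providing the missing upper semicontinuity and thereby closing the argument.
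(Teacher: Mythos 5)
Your opening steps track the paper's: you get $l(O(p))=1$ (the cleanest route is not via mixing and Proposition \ref{prop.bla} but directly from the definition --- the homoclinic large periods property supplies homoclinically related periodic points with coprime periods, so the gcd of the periods is $1$), hence a robust intersection $W^s(p)\pitchfork W^u(f(p))\neq\emptyset$ and a horseshoe as in Lemma \ref{horseshoemixing}. The genuine gap is exactly where you flagged the ``main obstacle'', and your proposed fix does not work. Homoclinic classes are only lower semicontinuous and can genuinely explode: the proposition is stated for an arbitrary (non-generic) $f$, and an arbitrarily small perturbation can merge $H(p)$ with dynamics that the invariant manifolds of $O(p)$ previously only accumulated on without intersecting, so that $H(p(g),g)$ contains points far from $H(p,f)$. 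Your continuation $\Lambda_\eps(g)$ is then $\eps$-dense only in the ``old'' part of the class. The argument you sketch to exclude this --- that a periodic orbit of $g$ in $H(p(g))$ far from $H(p,f)$ would ``correspond to a transverse homoclinic intersection already present for $f$'' --- cannot be made to work: such an orbit need not be the continuation of any orbit of $f$, and Hayashi's connecting lemma runs in the wrong direction (it creates intersections after a further perturbation; it never certifies that an intersection already existed before one). There is no way to restore upper semicontinuity here, so the final step fails.

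The paper's proof sidesteps the comparison between $H(p,f)$ and $H(p(g),g)$ altogether. Given $g$ close to $f$ and $\eps>0$, it chooses a periodic point $\tilde p$ of $g$, homoclinically related to $p(g)$, whose orbit is already $\eps/2$-dense in $H(p(g),g)$; such a point exists for every diffeomorphism because a homoclinic class is the closure of the periodic points homoclinically related to its defining orbit. The only data transported from $f$ is the single robust intersection $W^s(p)\pitchfork W^u(f(p))\neq\emptyset$, which via Proposition \ref{prop.bla} and Remark \ref{rmk.int.} forces $W^s(\tilde p)\pitchfork W^u(g(\tilde p))\neq\emptyset$ for \emph{every} $\tilde p$ homoclinically related to $p(g)$. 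Applying Lemma \ref{horseshoemixing} to $\tilde p$ for the map $g$ then yields, for every large $n$, a fixed point of $g^n$ homoclinically related to $p(g)$ whose orbit is $\eps/2$-Hausdorff-close to $O(\tilde p)$, hence $\eps$-dense in $H(p(g),g)$. If you reorganize your argument to pick the dense periodic orbit for $g$ rather than for $f$, your horseshoe machinery applies verbatim and the semicontinuity problem never arises.
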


\begin{proof}[Proof of Proposition \ref{r.lpp}]
Since $H(p)$ has the homoclinic large periods property the period of this homoclinic class has to be one,  $l(O(p))=1$. Indeed, unless the class reduce itself to a fixed point, there will be two periodic points homoclinically related to $p$ such that their periods are two distinct prime numbers. Hence, by Proposition \ref{prop.bla} we have that $W^s(p)\tcap W^u(f(p))\neq\emptyset$. Therefore, since this intersection is robust, we can conclude also by Proposition \ref{prop.bla} and Remark \ref{rmk.int.} that
$W^s(\tilde{p})\tcap W^u(g(\tilde{p}))\neq\emptyset$ for every hyperbolic period point $\tilde{p}$ homoclinically related to $p(g)$, for every diffeomorphism $g$ close enough to $f$. 

So, take an arbitrary $\eps>0$. There exists a periodic point $\tilde{p}\in H(p(g))$, homoclinically related 
with $p(g)$ and whose orbit is $\eps/2$ dense in $H(p(g))$.  Now, Lemma \ref{horseshoemixing} implies that there exists $N_0$ 
such that for every $n\geq N_0$ we can find a periodic orbit $\gamma=O(b)$ homoclinically related to $\tilde{p}$, $b\in\fix(g^n)$, which contains a subset  $\eps/2$ close to $O(\tilde{p})$ in the Hausdorff distance. 
In particular, $\gamma$ is an $\eps$ dense orbit inside $H(p(g))$. This establishes that $H(p(g))$ has the homoclinic large periods property, and completes the proof.
\end{proof}

Observe that the above proof establishes indeed that if a 
homoclinic class $H(p)$ of a diffemorphism $f$ is such that $W^s(p)\tcap W^u(f(p))\neq\emptyset$ 
then $H(p)$ has the homoclinic large periods property. Thus, combining these facts and  Theorem \ref{t.acmixing} we have the following corollary.

\begin{corollary}
Let $f$ be a generic diffeomorphism. An isolated homoclinic class of $f$ is topologically 
mixing if, and only if, it has homoclinic large periods property robustly. 
\label{c.lpp}\end{corollary}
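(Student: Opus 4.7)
The plan is to establish the two implications separately. The $(\Leftarrow)$ direction will not invoke the genericity hypothesis, while the $(\Rightarrow)$ direction relies crucially on Theorem \ref{t.acmixing} together with the observation recorded just after the proof of Proposition \ref{r.lpp}.

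For the $(\Leftarrow)$ direction, I assume that $H(p)$ has the homoclinic large periods property at $f$ itself (robustness is not needed here). Taking $\Lambda=H(p)$ in the definition, this is precisely the large periods property for the homeomorphism $f|_{H(p)}\colon H(p)\to H(p)$ endowed with the induced metric, since the periodic points supplied by the definition all lie in $H(p)$. An application of Lemma \ref{criteriopramixing} then immediately yields that $f|_{H(p)}$ is topologically mixing.

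For the $(\Rightarrow)$ direction, I first apply Theorem \ref{t.acmixing} to the isolated topologically mixing homoclinic class $H(p)$. Its decomposition into pieces cyclically permuted by $f$ and topologically mixing for $f^l$ must reduce to a single piece, forcing $l(O(p))=1$, and the characterization of $l$ in that theorem produces a transversal intersection point $q\in W^s(p)\tcap W^u(f(p))$. The observation following Proposition \ref{r.lpp} then asserts that such a class $H(p)$ has the homoclinic large periods property, and Proposition \ref{r.lpp} itself upgrades this to the robust version, which completes the proof.

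To verify the observation in the forward direction, given $\eps>0$ I would choose a periodic point $\tilde p\in H(p)$ homoclinically related to $p$ whose orbit is $\eps/2$-dense in $H(p)$, which exists because $H(p)$ is the closure of such periodic orbits. Since $l(O(p))=1$, Remark \ref{rmk.int.} gives $W^s(\tilde p)\tcap W^u(f(\tilde p))\neq\emptyset$, so Lemma \ref{horseshoemixing} applied to a small neighbourhood of $O(\tilde p)$ and a transverse intersection point yields $N_0$ such that, for every $n\geq N_0$, there is a fixed point $p_n$ of $f^n$ homoclinically related to $\tilde p$ (hence to $p$) whose orbit is $\eps/2$-close to $O(\tilde p)$ in Hausdorff distance and therefore $\eps$-dense in $H(p)$. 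The corollary is thus essentially a reorganization of the tools Theorem \ref{t.acmixing}, Lemma \ref{horseshoemixing}, Proposition \ref{r.lpp} and Lemma \ref{criteriopramixing}; the only mildly delicate point is that Lemma \ref{horseshoemixing} must be applied at $\tilde p$ rather than at $p$, which is exactly what Remark \ref{rmk.int.} enables once we know $l(O(p))=1$.
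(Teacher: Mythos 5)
Your proof is correct and follows essentially the same route as the paper, which obtains the corollary by combining Theorem \ref{t.acmixing} (forcing $l(O(p))=1$ and hence $W^s(p)\pitchfork W^u(f(p))\neq\emptyset$), the observation after Proposition \ref{r.lpp} together with Lemma \ref{horseshoemixing} for the homoclinic large periods property and its robustness, and Lemma \ref{criteriopramixing} for the converse. Your re-verification of the observation at $\tilde p$ via Remark \ref{rmk.int.} is exactly the argument used in the proof of Proposition \ref{r.lpp}.
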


\section{Topologically mixing homoclinic classes}

\subsection{Denseness of Bernoulli measures: Proof of Theorem \ref{teoA}}

We recall the following result of Bowen.

\begin{theorem}[\cite{Bow1}, Theorem 34]
\label{bowen}
Let $\Lambda$ be a topologically mixing isolated hyperbolic set. Then, there exists a Bernoulli measure supported in $\Lambda$. 
\end{theorem}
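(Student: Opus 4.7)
The plan is to use Bowen's symbolic coding to reduce the problem to producing a Bernoulli measure for a mixing subshift of finite type, and then to push it forward to $\Lambda$ via the coding map.

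First I would invoke Bowen's construction of a Markov partition $\{R_1, \ldots, R_m\}$ of $\Lambda$ with arbitrarily small diameter, which is available because $\Lambda$ is a locally maximal (isolated) hyperbolic set. Setting $A_{ij} = 1$ precisely when $\interior(R_i) \cap f^{-1}(\interior(R_j)) \neq \emptyset$ produces a subshift of finite type $(\Sigma_A, \sigma)$ and a H\"older continuous surjection $\pi : \Sigma_A \to \Lambda$ satisfying $\pi \circ \sigma = f \circ \pi$. The hypothesis that $f|_\Lambda$ is topologically mixing translates into the aperiodicity of $A$: some power $A^N$ has all entries strictly positive.

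Second, on $(\Sigma_A, \sigma)$ I would consider the Parry measure of maximal entropy $\mu_A$. Concretely, if $\lambda > 0$ is the Perron eigenvalue of $A$ with strictly positive left and right eigenvectors $u$ and $v$, then $\mu_A$ is the shift-invariant Markov measure with stationary distribution proportional to $u_i v_i$ and transition probabilities $A_{ij} v_j / (\lambda v_i)$. Since $A$ is aperiodic, the associated Markov chain is mixing, and the Friedman--Ornstein theorem then guarantees that $(\sigma, \mu_A)$ is measure-theoretically isomorphic to a Bernoulli shift.

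Third, I would push forward: set $\mu := \pi_* \mu_A$. This is an $f$-invariant Borel probability measure supported in $\Lambda$. To conclude that $(f|_\Lambda, \mu)$ is Bernoulli, it suffices to verify that $\pi$ is a measurable isomorphism between $(\Sigma_A, \sigma, \mu_A)$ and $(\Lambda, f, \mu)$ after removing a $\mu_A$-null set. The non-injectivity locus of $\pi$ is contained in the full $\sigma$-orbit of $\pi^{-1}\bigl(\bigcup_i \partial R_i\bigr)$, and Bowen's analysis shows this preimage decomposes into countably many cylinder sets indexed by the stable and unstable boundaries of the rectangles.

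The main obstacle is precisely this last step: showing that these ``seams'' of the Markov partition form a $\mu_A$-null set. This is classical but delicate; one way to argue is to inspect the explicit Markov formula on cylinders and exploit that each boundary piece is properly contained in a stable or unstable leaf of its rectangle, so its $\mu_A$-cylinder weight is strictly smaller than the corresponding rectangle's and the orbit sum telescopes to zero. Alternatively, one can invoke the general fact that a measure of maximal entropy of a mixing SFT cannot concentrate on a proper $\sigma$-invariant Borel set with smaller topological entropy. Once this null-set verification is settled, the Bernoulli property passes through $\pi$ automatically, producing the desired Bernoulli measure on $\Lambda$.
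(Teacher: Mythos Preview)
Your proposal is correct and follows essentially the same route the paper indicates. In fact the paper does not give its own proof of this statement at all: it simply cites Bowen and then remarks that Bowen's measure is measure-theoretically isomorphic to a mixing Markov chain, whence the Friedman--Ornstein theorem \cite{FO} upgrades it to Bernoulli. Your sketch (Markov partition $\to$ mixing SFT $\to$ Parry/Markov measure $\to$ Friedman--Ornstein $\to$ push forward through an a.e.\ injective coding) is precisely a detailed unpacking of that same outline.
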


\begin{remark}
Actually Bowen constructs a measure such that $(f|_{\Lambda},\mu_B)$ is a $K$-automorphism. But, in this case, $(f_{\Lambda},\mu_B)$ is measure theoretically isomorphic to a mixing Markov chain and by \cite{FO} it is isomorphic to a Bernoulli shift. 
\end{remark}

Now, we give the proof of Theorem \ref{teoA}.

\begin{proof}[Proof of Theorem \ref{teoA}]
Let $H(p)$ be an isolated topologically mixing homoclinic class of a $C^1$ generic diffeomorphism 
$f$. Let $\mu$ be an invariant measure supported in $H(p)$ and let $\eps>0$ be arbitrarily 
chosen. By Theorem \ref{ABC} there exists a measure $\mu_{\tilde{p}}$, supported on a hyperbolic
periodic orbit $O(\tilde{p})$, with $\tilde{p}\in H(p)$, which is $\eps/2$ close to $\mu$. 

 Since $f$ is $C^1$ generic, Theorem \ref{t.cmp} implies
that $H(\tilde{p})=H(p)$. In particular, we have that $H(\tilde{p})$ is topologically mixing.

From Remark \ref{mixing1} we know that there exists a point $q\in W^s(\tilde{p})\tcap W^u(f(\tilde{p}))$. 
For every small neighborhood $U$ of $O(\tilde{p})\cup O(q)$, Proposition \ref{mixinghorseshoe} tells us 
that the maximal invariant set $\Lambda_U=\cap_{n\in\Z}f^n(U)$ is a topologically mixing hyperbolic set. 
Moreover, since $q$ is a homoclinic point of $\tilde{p}$, by choosing $U$ sufficiently
small we have that
the points in $\Lambda_U$ spent portions of their orbit as large as we please shadowing the orbit of $\tilde{p}$.

Now, take $\nu$ the Bernoulli measure supported in $\Lambda_U$ which is given by Theorem \ref{bowen}. Since a typical point in the support of $\nu$ spent large portions of its orbit shadowing the orbit of $\tilde{p}$, we can choose $U$ such that $\nu$ is $\eps/2$ close to $\mu_{\tilde{p}}$. 

Thus, $\nu$ is $\eps$ close to $\mu$ and we are done.
\end{proof} 

\begin{remark}
The techniques employed above can be used to give a new proof of Sigmund's result on 
the denseness of Bernoulli measures for hyperbolic topologically mixing basic sets \cite{S1}. 
Indeed, our use of the large periods property gives a geometric alternative to the symbolic approach of Sigmund and a proof of his result using our techniques would proceed by the same argument as above, in the proof of Theorem \ref{teoA},  after we modified the following key results: first, Sigmund's result on denseness of periodic measures in a hyperbolic basic set, \cite{S2}, can be used instead of Theorem \ref{ABC}, and second Bowen's proof  of Smale's Spectral Decomposition Theorem (see pag. 47 of \cite{Bow}) can be used instead of Theorem \ref{t.acmixing} and Remark \ref{mixing1} to show the existence of nice intersections between the stable and unstable manifolds of hyperbolic periodic points. Therefore, with these modifications, the same proof as above can be applied.  
\end{remark}

\section{Robustly large Homoclinic class}

In this section we shall prove Theorem \ref{main.homoclinic} as a consequence of the following result:

\begin{theorem}
Let $f\in \diff^1(M)$ be a robustly transitive strong partially hyperbolic diffeomorphism, with 
$TM=E^s\oplus E^c_1\oplus \ldots E^c_k\oplus E^u$, having hyperbolic periodic points $p_s$ and 
$p_u$ with index $s$ and $d-u$, respectively,  where $s=dim\ E^s$ and $u=dim \ E^u$. 
Then, there exists an open subset $\mathcal{V}_f$ whose closure contains $f$, such that 
$M=H(p_s(g))=H(p_u(g))$ for every $g\in \mathcal{V}_f$. 
\label{r.homoclinic}\end{theorem}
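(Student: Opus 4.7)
The plan is to fix a small $C^1$-neighborhood $\mathcal{U}$ of $f$ where strong partial hyperbolicity and robust transitivity both persist (by Proposition~\ref{p.phpersiste}) and where the continuations $p_s(g),p_u(g)$ keep their prescribed indices, and to produce inside $\mathcal{U}$ a nonempty $C^1$-open set $\mathcal{V}_f$ with $f\in\overline{\mathcal{V}_f}$ along which $M=H(p_s(g),g)=H(p_u(g),g)$. First, intersecting $\mathcal{U}$ with the residual sets of Theorems~\ref{BC}, \ref{t.cmp} and~\ref{t.ABCDW} I would conclude that every generic $g$ in this intersection is transitive and satisfies $M=H(p_s(g),g)=H(p_u(g),g)$, and that moreover this common homoclinic class contains periodic points of every intermediate index $s+1,\dots,d-u-1$. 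Note that $p_s(g)$ and $p_u(g)$ coexist in the class $M$ despite having different indices.

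The core of the argument is upgrading this residual statement to a $C^1$-open statement. Fix a generic $g$ as above. I would inductively apply Hayashi's connecting lemma (Theorem~\ref{connecting lema}) between consecutive indices along the chain $s,s+1,\dots,d-u$, using the intermediate periodic points produced by index completeness, to create a chain of heterodimensional cycles starting at $p_s$ and ending at $p_u$. A further $C^1$-perturbation, in the spirit of Bonatti--D\'iaz, would then install blenders stabilising each of these cycles, so that the resulting transverse and quasi-transverse intersections become $C^1$-robust. Let $g_0$ denote the diffeomorphism obtained at the end of this perturbation process and take $\mathcal{V}_f\subset\mathcal{U}$ to be a $C^1$-neighborhood of $g_0$ in which all the constructed robust intersections persist; by construction $f\in\overline{\mathcal{V}_f}$.

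To check that $M=H(p_s(h),h)=H(p_u(h),h)$ for every $h\in\mathcal{V}_f$, I would combine the robust chain of heterodimensional intersections with the homoclinic large periods property (Proposition~\ref{r.lpp}) applied at each index to exhibit transverse homoclinic points of $p_s(h)$ (respectively $p_u(h)$) that are $C^1$-robustly dense in $M$; robustness of the whole manifold being a single chain-recurrence class, which is inherited from robust transitivity, then forces the equality $H(p_s(h),h)=M$, and similarly for $p_u(h)$. The main obstacle is precisely the blender construction in the second paragraph: adapting the Bonatti--D\'iaz machinery to the setting of $k\geq 1$ one-dimensional center subbundles, in such a way that the entire chain of heterodimensional cycles survives simultaneously under $C^1$-perturbations, is the delicate technical heart of the argument. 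All other steps essentially reduce to applications of the generic and perturbative tools assembled in Section~3.
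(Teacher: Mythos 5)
Your first half follows the paper's route: restricting to a neighbourhood where strong partial hyperbolicity and the continuations persist, invoking Theorems \ref{BC} and \ref{t.ABCDW} on a residual subset to get $M$ equal to a homoclinic class containing periodic points of every index in $[s,d-u]$, and then using the connecting lemma to create heterodimensional cycles between consecutive indices and stabilising them with blenders is exactly what the paper's Lemma \ref{l.blender} does via Proposition \ref{p.blender} (the Diaz--Rocha result). The obstacle you flag about the whole chain of cycles surviving ``simultaneously'' is resolved in the paper simply by iteration: each application of Proposition \ref{p.blender} yields an open set, the next cycle is created inside that open set, and one obtains a nested chain of open sets on which all the inclusions $cl(W^s(O(p_{j+1}(r))))\subset cl(W^s(O(p_{j}(r))))$ hold at once.

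The genuine gap is in your concluding step. The blender chain only gives the nested inclusions of closures of stable (resp.\ unstable) manifolds; it does not by itself give density in $M$, and neither Proposition \ref{r.lpp} nor ``robustness of $M$ being a single chain recurrence class'' can close the argument. The homoclinic large periods property concerns periodic orbits that are $\eps$-dense \emph{in the homoclinic class}, so invoking it to produce homoclinic points of $p_s(h)$ dense in $M$ presupposes the very equality $H(p_s(h))=M$ you are trying to prove; and a chain recurrence class containing a periodic point need not coincide with the homoclinic class of that point for an arbitrary (non-generic) $h$ --- that is precisely the residual-versus-open difficulty the theorem is meant to overcome. What is missing are the two geometric ingredients the paper actually uses. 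First, for every $r$ in the constructed open set, transitivity together with the uniform size of the strong stable (resp.\ strong unstable) leaves from Remark \ref{r.hps} forces $W^u(O(p_s(r)))$ and $W^s(O(p_{d-u}(r)))$ to be dense in $M$: a dense forward orbit eventually enters a uniform neighbourhood of $p_s(r)$, so its strong stable leaf meets $W^u_{loc}(p_s(r))$, and points on the same strong stable leaf have the same omega-limit set. Combined with the blender inclusions, this makes both $W^s(O(p_s(r)))$ and $W^u(O(p_s(r)))$ dense. Second, Lemma \ref{transversallity}: for a point of extremal index $s=\dim E^s$, density of both invariant manifolds plus the partially hyperbolic structure (so that $T_qW^u(O(p_s))=E^c\oplus E^u$ is transverse to the uniformly-sized local strong stable leaves) yields transverse homoclinic points densely in $M$, hence $M=H(p_s(r))$, and symmetrically for $p_u$ after applying the same lemma to $f^{-1}$. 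Without these two steps your argument does not close.
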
 

Before we prove Theorem \ref{r.homoclinic}, let us see how it implies Theorem \ref{main.homoclinic}.

\begin{proof}[Proof of Theorem \ref{main.homoclinic}]
First we observe that it suffices to deal with the interior of non-hyperbolic robustly transitive 
diffeomorphisms, since in the Anosov case the whole manifold is robustly a homoclinic class, which is a consequence of the shadowing lemma. 

Recall that $\mathcal{T}_{NH}(M)\subset\ST(M)$ denotes the interior of non-hyperbolic robustly transitive diffeomorphisms far from homoclinic tangencies. Hence, by Theorem \ref{BC} and Theorem \ref{p.CSY} there exists a residual subset $\mathcal{R}$ in $\mathcal{T}_{NH}(M)$ such that if $f\in \mathcal{R}$ then:
\begin{itemize}
\item[a)] $M$ coincides with a homoclinic class;

\item[b)]  $f$ is partially hyperbolic, with the central bundle admitting a splitting in one dimension sub bundles. I.e., $TM=E^s\oplus E_1^c\oplus \ldots \oplus E^c_k\oplus E^u$;

\item[c)] either there exist a hyperbolic periodic point with index $s$, or there exists hyperbolic periodic points  
with index $s+1$ whose  the $(s+1)-$Lyapunov exponent is arbitrarilly close to zero. 
Where $s=dim\ E^s$. 

\item[d)] either there exist a hyperbolic periodic point with index $d-u$, or there exists hyperbolic periodic points  with index $d-u-1$ whose  the $(d-u-1)-$Lyapunov exponent is arbitrary close to zero. Where $u=dim \ E^u$. 
\end{itemize}

According to Theorem \ref{p.CSY}, $E^s$ and/or $E^u$ could be trivial. However, this 
cannot happen in our situation. Indeed, we claim that both $E^s$ and $E^u$ are non-trivial. In particular, $f$ is strongly partially hyperbolic. To see this,
suppose by contradiction the existence of $f\in \mathcal{R}$ with $E^s$ trivial. 
Hence, by item $c)$ above, $f$ should have either a source or hyperbolic periodic points with index one, 
with the only one Lyapunov negative exponent being arbitrary close to zero. In the last case, we can use 
Lemma \ref{change.index} to perturb  $f$ in order to find also a source. Therefore, if $E^s$ is 
trivial, then we can find a diffeomorphism $g$ close to $f$, having a source, which is a contradiction 
with the transitivity of $g$. Similarly we conclude that $E^u$ is also non-trivial. 
Henceforth, item b) above can be replaced by:

\begin{itemize}
\item[b')] every $f\in \mathcal{R}$ is strongly partially hyperbolic. 
\end{itemize}

Moreover, by the same argument above using Lemma \ref{change.index}, after a perturbation we can
assume that $f$ has hyperbolic periodic points of indices $s$ and $d-u$. Thus, we can find a dense 
subset $\mathcal{R}_1$ inside $\mathcal{T}_{NH}(M)$ formed by robustly transitive strong partially hyperbolic diffeomorphisms $f$ satisfying the hypothesis of Theorem \ref{r.homoclinic}. Then, considering $\mathcal{V}_f$ given by Theorem \ref{r.homoclinic} for every $f\in \mathcal{R}_1$ we have that
$$
\mathcal{A}=\bigcup_{f\in \mathcal{R}_1} \mathcal{V}_f,
$$
is an open and dense subset of $\mathcal{T}_{NH}(M)\subset\ST(M)$.  
By Theorem \ref{r.homoclinic}, for every 
diffeomorphism in $\mathcal{A}$ the whole manifold $M$ coincides with a homoclinic 
class. This ends the proof 
\end{proof}

In the sequence we prove some technical results which are key steps in the proof of Theorem \ref{r.homoclinic}.

The following result allows to find open sets of diffeomorphisms for which the topological 
dimension of stable (and unstable manifold) of hyperbolic periodic points is larger than the 
differentiable dimension.  

\begin{lemma} Let $f\in \diff^1(M)$ be a robustly transitive strong partially hyperbolic  diffeomorphism.  Suppose there are hyperbolic periodic points $p_j$, $j=i,\ i+1, \ldots, k$, with indices $I(p_j)=j$ for $f$. Hence, given any small enough neighborhood  $\mathcal{U}$ of $f$, where is defined the continuation of the hyperbolic periodic points $p_j$, there exists an open set $\mathcal{V}\subset \mathcal{U}$ such that for every $g\in \mathcal{V}$: 
$$
W^s(p_k(g))\subset cl(W^s(p_{k-1}(g)))\subset \ldots \subset cl(W^s(p_{i+1}(g)))\subset cl(W^s(p_{i}(g))), and
$$   
$$
W^u(p_i(g))\subset cl(W^u(p_{i+1}(g)))\subset \ldots \subset cl(W^u(p_{k-1}(g)))\subset cl(W^u(p_{k}(g))).
$$   
\label{l.blender}\end{lemma}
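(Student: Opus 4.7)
The plan is to proceed by induction on the consecutive pairs $(p_j, p_{j+1})$ for $j=i, i+1,\ldots, k-1$, installing a Bonatti--D\'iaz blender between each such pair in order to turn a heterodimensional intersection into a robust inclusion of invariant manifolds. The key observation is that the desired inclusion $W^s(p_{j+1}(g))\subset \operatorname{cl}(W^s(p_j(g)))$ is precisely the defining property of a $cs$-blender associated to $p_j$ whose activating region meets $W^s(p_{j+1}(g))$. Since $f$ is strongly partially hyperbolic with a dominated splitting $E^s\oplus E^c_1\oplus\cdots\oplus E^c_k\oplus E^u$ and the indices of the $p_j$ differ by one, each center subbundle $E^c_j$ provides the one-dimensional direction along which the blender construction can be carried out.

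First, I would fix $j$ and produce a diffeomorphism $g_0\in\mathcal{U}$ arbitrarily close to $f$ for which $p_j(g_0)$ and $p_{j+1}(g_0)$ form a heterodimensional cycle. This uses the robust transitivity of $f$: since every pair of non-empty open sets is connected by orbits of $g_0$, Hayashi's connecting lemma (Theorem \ref{connecting lema}) applied twice yields a non-empty intersection $W^u(p_j(g_0))\cap W^s(p_{j+1}(g_0))$ after a small perturbation, while the quasi-transverse intersection $W^s(p_j(g_0))\cap W^u(p_{j+1}(g_0))\neq\emptyset$ in the remaining one-dimensional center direction can be arranged by a further perturbation along the corresponding center leaves. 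The difference in indices is exactly one, so the cycle is associated to the center bundle $E^c_{j+1}$, which guarantees that the cycle is of ``co-index one'' type.

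Next, I would apply the classical Bonatti--D\'iaz blender unfolding (as in the construction of $C^1$-robust heterodimensional cycles) to $g_0$. By a local perturbation supported in a neighborhood of a point of $W^u(p_j(g_0))\cap W^s(p_{j+1}(g_0))$, one obtains $g_1\in\mathcal{U}$ and an open set $\mathcal{V}_j\subset\mathcal{U}$ containing $g_1$ such that $p_j$ has, for every $g\in\mathcal{V}_j$, a $cs$-blender whose superposition region contains an open set traversed by $W^s(p_{j+1}(g))$; this robustly forces $W^s(p_{j+1}(g))\subset \operatorname{cl}(W^s(p_j(g)))$. Running the construction in a dual fashion (interchanging the roles of stable and unstable manifolds) yields, possibly after shrinking $\mathcal{V}_j$, the inclusion $W^u(p_j(g))\subset\operatorname{cl}(W^u(p_{j+1}(g)))$ for all $g\in\mathcal{V}_j$ as well.

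The main difficulty is to iterate this step across all pairs while preserving the inclusions already obtained. I would handle this by performing the constructions in order $j=k-1, k-2,\ldots,i$, each time choosing the perturbation small enough (and supported in a small enough neighborhood of the corresponding heterodimensional cycle) so as not to disturb the blenders built in the earlier steps; this is possible because the conclusion of each step is an open condition and the invariant manifolds vary continuously with $g$. Intersecting the resulting open sets $\mathcal{V}_{k-1},\ldots,\mathcal{V}_i$ gives an open set $\mathcal{V}\subset\mathcal{U}$ in which all the stable inclusions hold, and similarly for the unstable ones. The genuine technical obstacle is the blender construction itself: one must verify that the strong partial hyperbolicity and the one-dimensional center subbundles allow the classical cu/cs-blender recipe to be carried out in this setting, which is where the hypothesis of strong partial hyperbolicity is essential.
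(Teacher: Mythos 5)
Your proposal is correct and follows essentially the same route as the paper: use robust transitivity and Hayashi's connecting lemma to create a co-index one heterodimensional cycle between each consecutive pair $p_j$, $p_{j+1}$, unfold it into a blender to obtain the robust inclusions of invariant manifolds, and iterate over the pairs inside nested open sets so that earlier inclusions persist. The one step you flag as the ``genuine technical obstacle''---carrying out the blender construction for a quasi-transverse co-index one cycle far from tangencies---is exactly what the paper outsources to Proposition \ref{p.blender} (D\'iaz--Rocha, via Proposition 6.14 and Lemma 6.12 of \cite{BDV}), so no new construction is needed.
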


To prove the above lemma  we will use the following result which is a consequence of Proposition 6.14 and 
Lemma 6.12 in \cite{BDV}, which are results of Diaz and Rocha \cite{DR}. It is worth to point out that this result is a consequence of the well known blender technique, which appears by means of unfolding a heterodimensional co-dimensional one cycle far from homoclinic tangencies. 

\begin{proposition}
Let $f$ be a $C^1$ diffeomorphism with a heterodimensional cycle associated to saddles $p$ and $\tilde{p}$ with indices $i$ and $i+1$, respectively. Suppose that the cycle is $C^1-$far from homoclinic tangencies. Then there exists an open set $\mathcal{V}\subset \diff^1(M)$ whose closure contains $f$ such that for every $g\in \mathcal{V}$ 
$$
W^s(\tilde{p}(g))\subset cl(W^s(p(g))) \text{ and } W^u(p(g))\subset cl(W^u(\tilde{p}(g))). 
$$
\label{p.blender}\end{proposition}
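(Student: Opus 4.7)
The plan is to construct a \emph{blender} — a hyperbolic horseshoe whose stable set has the topological thickness of a manifold of one dimension higher than its true dimension — in a neighborhood of the cycle, and to show that this blender forces the claimed robust inclusions.

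After a first $C^1$-small perturbation via Franks' lemma (Theorem~\ref{l.franks}) I would linearize $f$ near $p$ and $\tilde p$ with real, simple eigenvalues, arranging that the weakest unstable eigenvalue at $p$ and the weakest stable eigenvalue at $\tilde p$ correspond to a common one-dimensional central direction, one expanding and one contracting, both close to $1$. Since the cycle is far from homoclinic tangencies, its limit set admits a dominated splitting $E^s\oplus E^c\oplus E^u$ with $\dim E^s=i$, $\dim E^c=1$, $\dim E^u=d-i-1$, which extends to a neighborhood by Proposition~\ref{p.phpersiste}.

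Following Diaz-Rocha, I would define on a cross-section $\Sigma$ to the transverse heteroclinic leg of the cycle a first-return map $R$ with two branches — one traveling near the orbit of $p$, one near the orbit of $\tilde p$. The domination makes $R$ hyperbolic in the $E^s$ and $E^u$ directions, and in the central coordinate $\xi\in\mathbb{R}$ it induces two affine-like contractions $\phi_1,\phi_2$. Using the remaining freedom in Franks' lemma, I would then adjust the central eigenvalues and the position of the quasi-transversal heteroclinic intersection so that $\phi_1(I)\cup\phi_2(I)\supset I$ for some interval $I$: this is the \emph{covering property}, which defines the blender, and which is open in the $C^1$-topology. Thus there is an open set $\mathcal{V}\subset\mathcal{U}$ whose closure contains $f$ on which this property persists; the resulting maximal invariant set $\Lambda(g)$ is a hyperbolic horseshoe of index $i$ containing the continuation $p(g)$, and by construction $\tilde p(g)$ is homoclinically linked to $\Lambda(g)$ for every $g\in\mathcal{V}$.

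The covering property implies that $cl(W^s(\Lambda(g)))$ contains every $C^1$ disc of dimension $i+1$ sufficiently tangent to $E^s\oplus E^c$ and meeting the blender region of $\Sigma$ — in particular, a small piece of $W^s(\tilde p(g))$ near the quasi-transversal heteroclinic, which after iterating and invoking the inclination lemma yields $W^s(\tilde p(g))\subset cl(W^s(p(g)))$. A symmetric argument, unfolding the cycle on the opposite leg to produce a \emph{dual} blender whose unstable side has the covering property, gives $W^u(p(g))\subset cl(W^u(\tilde p(g)))$. The main obstacle — and the technical heart of the Diaz-Rocha construction — is arranging the two images $\phi_1(I),\phi_2(I)$ to overlap \emph{and} sit on the correct sides relative to the fixed points of the two branches in $\Sigma$; without this extra geometric constraint the covering property fails to force accumulation of the lower-dimensional $W^s(p(g))$ on the higher-dimensional $W^s(\tilde p(g))$.
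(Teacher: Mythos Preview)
The paper does not prove this proposition at all: it is stated as a black-box consequence of Proposition~6.14 and Lemma~6.12 of \cite{BDV}, themselves drawn from Diaz--Rocha \cite{DR}, and is then immediately used in the proof of Lemma~\ref{l.blender}. So there is no ``paper's own proof'' to compare against.

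What you have written is a reasonable high-level outline of the blender mechanism that underlies the cited Diaz--Rocha results: the domination coming from being far from tangencies, the reduction to a one-dimensional central dynamics via a return map, the arrangement of the covering property by unfolding the cycle, and the distinctive feature of blenders that the closure of the stable set of an index-$i$ horseshoe becomes topologically $(i+1)$-dimensional. Your identification of the delicate point --- the relative position of the images $\phi_1(I),\phi_2(I)$ and the two reference points --- is also accurate. As a sketch of \emph{why} the quoted result holds, this is fine; but be aware that turning it into an actual proof is substantially more work than your outline suggests (the construction in \cite{DR} and Chapter~6 of \cite{BDV} runs to many pages), and several of your steps (e.g., that a single perturbation simultaneously produces the primary blender and its dual, or that ``homoclinically linked to $\Lambda(g)$'' immediately gives the inclusion for $W^s(\tilde p(g))$) hide nontrivial arguments. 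For the purposes of this paper, the correct move is simply to cite the result, as the authors do.
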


\begin{proof}[Proof of Lemma \ref{l.blender}]
Since $f$ is a robustly transitive strong partially hyperbolic diffeomorphism, we can assume that every diffeomorphism $g\in \mathcal{U}$ is transitive and is strong partially hyperbolic, reducing  $\mathcal{U}$ if necessary. In particular,  $\mathcal{U}$ is far from homoclinic tangencies, $\mathcal{U}\subset (cl(\mathcal{HT}(M)))^c$. Now, using the transitivity of $f$, there are points $x_n$ converging to the stable manifold of $p_{i+1}$ whose a sequence of iterates $f^{m_n}(x_n)$ is converging  to the unstable manifold of $p_{i}$. Hence, we can use Hayashi's connecting lemma, to perturb the diffeomorphism $f$ to $\tilde{f}$ such that $W^u(p_i(\tilde{f}))$ intersects $W^s(p_{i+1}(\tilde{f}))$, which one we could assume be transversal after a  perturbation, if necessary, since $\dim\ W^u(p_i(\tilde{f}))+\dim\ W^s(p_{i+1}(\tilde{f}))>d$. Hence, we can use once more the connecting lemma to find $f_1\in {\mathcal{U}}$ close to $\tilde{f}$ exhibiting a heterodimensional cycle between $p_i(f_1)$
and $p_{i+1}(f_1)$, since $\tilde{f}$ is also transitive.  Moreover, and in fact this is needed to apply Proposition \ref{p.blender}, the intersection between $W^s(p_i(f_1))$ and $W^u(p_{i+1}(f_1))$ could be assumed quasi-transversal in the sense that $ T_qW^s(p_i(f_1))\cap T_qW^u(p_{i+1}(f_1))=\{0\}$. If this is not true, we can do a perturbation of the diffeomorphism using Franks lemma, to get such property.   

Thus, since $f_1$ is far from homoclinic tangencies, we can use Proposition \ref{p.blender}  to find  an open set $\mathcal{V}_1\subset \mathcal{U}$ such that 
$$
W^s(p_{i+1}(g))\subset cl(W^s(p_i(g))) \text{ and } W^u(p_{i}(g))\subset cl(W^u(p_{i+1}(g))), 
$$
for every $g\in \mathcal{V}_1$. 

Now, since $f_1$ is also  robustly transitive we can repeat the above argument to find $f_2\in \mathcal{V}_1$ exhibiting a heterodimensional cycle between $p_{i+1}$ and $p_{i+2}$. Thus, by Proposition \ref{p.blender} there exists an open set $\mathcal{V}_2\subset \mathcal{V}_1$, such that 
$$
W^s(p_{i+2}(g))\subset cl(W^s(p_{i+1}(g))) \text{ and } W^u(p_{i+1}(g))\subset cl(W^u(p_{i+2}(g))), 
$$
for every $g\in \mathcal{V}_2$. 

Repeating this argument finitely many times we will find open sets $\mathcal{V}_{k-i}\subset \mathcal{V}_{k-i-1}\subset \ldots \subset \mathcal{V}_1$ such that 
$$
W^s(p_{i+j}(g))\subset cl(W^s(p_{i+j-1}(g))) \text{ and } W^u(p_{i+j-1}(g))\subset cl(W^u(p_{i+j}(g))), 
$$
for every $g\in \mathcal{V}_{j}$, and $j=1,\dots k-i$. 

Taking $\mathcal{V}=\mathcal{V}_{k-i}$ the result follows. 
\end{proof}

The next result use properties of a partially hyperbolic splitting to guarantee that some special kind of dense sub-manifolds in $M$ should intersect each other transversally and densely in the whole manifold. 

\begin{lemma}
Let $f$ be a partially hyperbolic diffeomorphism on $M$ with non trivial stable bundle $E^s$, and having a hyperbolic periodic point $p$ with index $s=dim \ E^s$. If  $W^s(O(p))$ and $W^u(O(p))$ are dense in $M$, then $M=H(p)$.  
\label{transversallity}\end{lemma}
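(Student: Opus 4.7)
My plan is to use the strong stable foliation $\mathcal{F}^s$ from Remark \ref{r.hps} together with a local product structure argument to exhibit a transversal intersection of $W^s(O(p))$ and $W^u(O(p))$ in every open subset of $M$.

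The first key observation is that $W^s(p)=\mathcal{F}^s(p)$. Indeed, both are invariant under $f^{\tau(p)}$, tangent to $E^s$ at $p$, and of dimension $s=\dim E^s$ (using the hypothesis $I(p)=s$); by the uniqueness in the (strong) stable manifold theorem they agree in a neighbourhood of $p$ and hence, after saturating by forward iteration, globally. Applying $f$ gives $W^s(f^j(p))=\mathcal{F}^s(f^j(p))$ for every $j$, so $W^s(O(p))$ is $\mathcal{F}^s$-saturated: every strong stable leaf meeting $W^s(O(p))$ lies in it. The second observation is that $W^u(p)$ is transverse to $\mathcal{F}^s$ at each of its points: $\dim W^u(p)=d-s$, and since $T_pW^u(p)=E^c_p\oplus E^u_p$ by the index hypothesis, the invariance of the dominated splitting forces $T_qW^u(p)$ to stay in the unique $Df$-invariant complement of $E^s$ at every $q\in W^u(p)$, i.e.\ $T_qW^u(p)\oplus E^s_q=T_qM$.

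With these in hand, fix $y\in M$ and an open neighbourhood $U$ of $y$, and aim to produce a transverse intersection inside $U$. By density of $W^u(O(p))$, pick $x\in W^u(O(p))\cap U$ and a small $(d-s)$-disk $D\subset W^u(O(p))$ through $x$, which by the previous paragraph is transverse to $\mathcal{F}^s$. Remark \ref{r.hps} says the local strong stable leaves have uniform size and vary continuously with the basepoint, so the tube $V=\bigcup_{z\in D}\mathcal{F}^s_{\mathrm{loc}}(z)$ is an open neighbourhood of $x$; after shrinking $D$ if necessary, $V\subset U$. Density of $W^s(O(p))$ now yields $w\in V\cap W^s(O(p))$, and by definition of $V$ there is a unique $z\in D$ with $w\in\mathcal{F}^s_{\mathrm{loc}}(z)$. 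The first observation gives $z\in\mathcal{F}^s(w)\subset W^s(O(p))$, and the second gives that the intersection of $W^s(O(p))$ and $W^u(O(p))$ at $z$ is transverse. Hence $z\in (W^s(O(p))\pitchfork W^u(O(p)))\cap U$, so $H(p)$ meets $U$; since $U$ was arbitrary and $H(p)$ is closed, $H(p)=M$.

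I expect the only mildly delicate step to be the two geometric identifications in the second paragraph, namely $W^s(p)=\mathcal{F}^s(p)$ and the tangency of $W^u(p)$ to $E^c\oplus E^u$; both depend crucially on the hypothesis $I(p)=\dim E^s$ (which matches dimensions) together with uniqueness of invariant manifolds tangent to a prescribed subbundle of a dominated splitting. Once these are in place, the rest is a routine local-product-structure density chase, and no perturbative tool is needed.
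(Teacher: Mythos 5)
Your proof is correct and takes essentially the same route as the paper's: both use the index hypothesis to identify $T_qW^u(O(p))$ with $E^c_q\oplus E^u_q$, the $\mathcal{F}^s$-saturation of $W^s(O(p))$, and the uniform size and continuity of the local strong stable leaves from Remark \ref{r.hps} to produce a transverse intersection point of $W^s(O(p))$ and $W^u(O(p))$ near any given point of $M$. Your foliated tube over a disk in $W^u(O(p))$ is just a slightly more explicit packaging of the paper's local-product-structure step, in which one instead picks $\tilde{q}\in W^s(O(p))$ close to $q\in W^u(O(p))$ and observes that $W^{ss}_{loc}(\tilde{q})\subset W^s(O(p))$ must cross $W^u(O(p))$ transversally.
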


\begin{proof}
Let $E^s\oplus E^c\oplus E^u$ be the partially hyperbolic splitting. Using Remark \ref{r.hps} we know that the local strong stable manifolds have uniform size. 

For any $x\in M$, since $W^u(O(p))$ is dense, there exists $q\in W^u(O(p))$ arbitrarily close to $x$. Also, by hypothesis of the index of $p$, and the partially hyperbolic structure, it should be true that $T_qW^u(O(p))=E^c\oplus E^u$. Hence, by the continuity of the local strong stable manifold,  $W^{ss}_{loc}(y)$  should intersect transversally $W^u(O(p))$ in a point close to $q$, for any point $y$ close enough to $q$. In particular, since $W^s(O(p))$ is also dense, there exists $\tilde{q}\in W^s(O(p))$ such that $W_{loc}^{ss}(\tilde{q})$ intersects transversally $W^u(O(p))$. However, $W_{loc}^{ss}(\tilde{q})$ is contained in $W^s(O(p))$, which implies there is a transversal intersection between $W^s(O(p))$ and $W^u(O(p))$ close to $q$, in particular, close to $x$.    
\end{proof}

Finally, using the above lemmas we give a proof of Theorem \ref{r.homoclinic}.

\begin{proof}[Proof Theorem \ref{r.homoclinic}]
Since $p_s$ and $p_u$ are hyperbolic periodic points, we take $\mathcal{U}$ small enough such that every diffeomorphism  $g\in \mathcal{U}$ has defined the continuations $p_s(g)$ and $p_u(g)$. Reducing $\mathcal{U}$ if necessary, we could also assume that every $g\in \mathcal{U}$ is a strong partially hyperbolic diffeomorphism with same extremal bundles dimension as in the partially hyperbolic decomposition of $TM$ as $f$, which follows by the continuity of the partially hyperbolicity and the existence of $p_s$ and $p_u$ robustly. 

Now, using Theorem \ref{BC} together with Theorem \ref{t.ABCDW} we can find a residual subset $\mathcal{R}$ in $\mathcal{U}$ such that $M$ coincides with a homoclinic class for every $g\in \mathcal{R}$, and moreover $g$ has hyperbolic periodic points of any index in $[s,d-u]\cap\N$.  

We fix $g\in \mathcal{R}$, and let $p_s=p_s(g)$, $p_{s+1}$, $\ldots$, $p_{d-u}=p_u(g)$ be hyperbolic periodic points of $g$ with indices $s$, $s+1$, $\ldots$, $d-u$, respectively. Also, for all $n\in \N$, let $\mathcal{V}_n\subset \mathcal{U}$  small neighborhoods of $g$, such that if $g_n\in \mathcal{V}_n$, then $g_n$ converges to $g$ in the $C^1-$topology, when $n$ goes to infinity. 

Now, since $g$ is still a robustly transitive strong partially hyperbolic diffeomorphism having hyperbolic periodic points of all possible indices, we denote by $\tilde{\mathcal{V}}_n\subset \mathcal{V}_n$ the open sets given for $g$ and $\mathcal{V}_n$ by Lemma \ref{l.blender}. Hence, using the invariance of the stable  manifold of hyperbolic periodic points, by Lemma \ref{l.blender} we have the following: 
\begin{equation}
cl(W^s(O(p_{d-u}(r))))\subset cl(W^s(O(p_{d-u-1}(r))))\subset \ldots \subset cl(W^s(O(p_{s}(r)))),  
\label{eq10}\end{equation}
for every $r\in \tilde{\mathcal{V}}_n$.

\vspace{0,2cm}
{\it Claim: $W^u(O(p_{s}(r)))$ and $W^s(O(p_{d-u}(r)))$ are dense in $M$, for every $r\in \tilde{\mathcal{V}}_n$}. 

\vspace{0,1cm}
Since $r$ is transitive, there exist $x\in M$ such that the forward orbit of $x$ is dense in $M$. Now, since $r$ is partially hyperbolic, for Remark \ref{r.hps} there exists the strong stable foliation that integrates the direction $E^{s}$. Moreover, these leafs have local uniform length. Hence, as done in the proof of Lemma \ref{transversallity}, we can take $r^j(x)$ close enough to $p_s(r)$ such that $W^{ss}(x)$, the strong stable leaf containing $x$, intersects the local unstable manifold of $p_s(r)$, $W^u_{loc}(p_s(r))$.  Therefore, since points in the same strong stable leaf have the same omega limit set, we have that $W^u(O(p_s(r)))$ is dense in the whole manifold $M$. We can repeat this argument using also the existence of a point $y$ having a dense backward orbit, and the existence of the strong unstable foliation to conclude that $W^s(O(p_{d-u}(r)))$ is also dense in $M$. 

\vspace{0,2cm}

Thus, by equation (\ref{eq10}) and the Claim, we have that $W^s(O(p_{s}(r))))$ is dense in $M$. 
Similarly, we can show that  $W^u(O(p_{d-u}(r))))$ is also dense in $M$. 

Provided that $r$ is strong partially hyperbolic, and that $W^s(O(p_{i}(r))))$ and \\ $W^u(O(p_{i}(r))))$ are dense in $M$, for $i=s$ and $d-u$, we can apply Lemma \ref{transversallity} for $f$ and $f^{-1}$ to conclude that 
$$
M=H(p_{s}(r))=H(p_{d-u}(r)),
$$
for every $r\in \tilde{\mathcal{V}}_n$. 

Hence, the proof is finished defining $\tilde{\mathcal{V}}_g=\cup \tilde{\mathcal{V}}_n$, and 
$$
\mathcal{V}_f=\bigcup_{g\in \mathcal{R}} \tilde{\mathcal{V}}_g,
$$
which is  an open and dense subset of $\mathcal{U}$, and hence contains $f$ in its closure. 
\end{proof}

\end{document}